\newtheorem{lemma}{Lemma}
\newtheorem{theorem}{Theorem}
\newtheorem{remark}{Remark}
\newtheorem{corollary}{Corollary}
\newtheorem{example}{Example}
\newtheorem{definition}{Definition}
\def \proof{\medskip \par \noindent {\bf Proof:} \ \ }
\begin{document}
\bibliographystyle{plain}

\thispagestyle{empty}
\setcounter{page}{0}

{\LARGE Guszt\'av MORVAI and Benjamin WEISS: }

\vspace {1cm}

{\Large  On Estimating  the Memory for Finitarily Markovian Processes.}

\vspace {1cm}

{\Large   Ann. Inst. H. Poincar\'e Probab. Statist.  43  (2007),  no. 1, 15--30. }

\vspace {1cm}

\begin{abstract}
Finitarily Markovian processes are those  processes
  $\{X_n\}_{n=-\infty}^{\infty}$ for which
there is a finite $K$ ($ K=K( \{X_n\}_{n=-\infty}^0$) such that the
conditional distribution of $X_1$ given the entire past is equal
to the conditional distribution of $X_1$ given only
   $\{X_n\}_{n=1-K}^0$. The least such value of $K$ is called
   the memory length.
   We give a rather complete analysis of  the problems
     of universally estimating the least such value of $K$, both in the
     backward sense that we have just described and in the forward sense,
     where one observes successive values of  $\{X_n\}$ for  $n \geq 0$
     and asks for the least value $K$ such that the conditional
     distribution of $X_{n+1}$ 
given $\{X_i\}_{i=n-K+1}^n$
     is the same as the conditional distribution of $X_{n+1}$
     given $\{X_i\}_{i=-\infty}^n$. We allow for finite or
     countably infinite alphabet size.

\bigskip

 Les processus Markoviens finitaires sont des processus
  $\{X_n\}_{n=-\infty}^{\infty}$ pour lesquels
il existe un entier  $K$ fini  ($ K=K( \{X_n\}_{n=-\infty}^0$) tel que
la distribution conditionnelle de  $X_1$ etant donn\'e tout le pass\'e
soit \'egale \`a la distribution conditionnelle de  $X_1$ etant donn\'e
seulement
   $\{X_n\}_{n=1-K}^0$. La plus petite valeur d'un tel  $K$ est appel\'ee
   la longueur de la m\'emoire.
   Nous donnons une analyse compl\`ete du probl\`eme de l'estimation de
   la plus petite de ces valeurs de  $K$, aussi bien en remontant dans
   le pass\'e qu'en allant vers le futur, c'est \`a dire quand on observe
   les valeurs successives de  $\{X_n\}$ pour  $n \geq 0$ et
     qu'on recherche la plus petite valeur de $K$ telle que la distribution
     conditionnelle de $X_{n+1}$ etant donn\'e $\{X_i\}_{i=n-K+1}^n$
     soit la m\^eme que la distribution conditionnelle de $X_{n+1}$
     etant donn\'e $\{X_i\}_{i=-\infty}^n$. La taille des alphabets peut
     etre choisie finie ou infinie.
\end{abstract}

\pagebreak

\section{Introduction}

\smallskip
\noindent

  An important class of stationary ergodic processes that greatly
  extends the finite order Markov chains is the
  finitarily Markovian class. Informally, these are those processes
  $\{X_n\}_{n=-\infty}^{\infty}$ for which
there is a finite $K$ (that depends on the past $\{X_n\}, n \leq
0$) such that the conditional distribution of $X_1$ given the entire
past is equal to the conditional distribution of $X_1$ given only
   $\{X_n\}, K < n \leq 0$.
      When the process is a Markov chain of order
     $L$ then one  can simply take $K=L$ independent of the values that
     the process takes. However, even for such Markov chains, quite often
     a smaller value may exist for certain realizations of the process.
     Our main goal here is to give a rather complete analysis of  the problems
     of universally estimating the least such value of $K$, both in the
     backward sense that we have just described and in the forward sense,
     where one observes successive values of  $\{X_n\} , n \geq 0$.
  For the case of finite alphabet finite order Markov chains similar questions
  have been studied by B\"{u}hlman and Wyner in \cite{BW99}. However, the fact
  that we want to treat countable alphabets complicates matters significantly.
  The point is that while finite alphabet Markov chains have exponential
  rates of convergence of empirical distributions, for countable alphabet
  Markov chains no universal rates are available at all.

  We encountered this problem in \cite{MW05} where we gave a universal
  estimator for the order of a Markov chain on a countable state space,
  and some of the techniques that we use here have their origin in that
  paper.
   Before describing our results in more detail let us define more precisely
   the class of processes that we are considering.
\smallskip
\noindent
First let us fix the notation.
Let $\{X_n\}_{n=-\infty}^{\infty}$ be a stationary and ergodic time series taking values from a
discrete (finite or countably infinite)  alphabet
${\cal X}$. (Note that all stationary time series $\{X_n\}_{n=0}^{\infty}$
can be thought to be a
two sided time series, that is, $\{X_n\}_{n=-\infty}^{\infty}$. )
For notational convenience, let $X_m^n=(X_m,\dots,X_n)$,
where $m\le n$. Note that if $m>n$ then $X_m^n$ is the empty string.

\bigskip
\noindent
For  convenience let
  $p(x_{-k}^0)$ and $p(y|x^0_{-k})$ denote the  distribution $P(X_{-k}^{0}=x_{-k}^{0})$ and
the conditional distribution $P(X_1=y|X^0_{-k}=x^0_{-k})$, respectively.

\bigskip
\noindent
\begin{definition} For a stationary  time series $\{X_n\}$ the (random) length
 $K(X^0_{-\infty})$
 of the memory of the sample path
 $X^0_{-\infty}$
is the smallest possible $0\le K<\infty$ such that
for all $i\ge 1$, all $y\in {\cal X}$, all $z^{-K}_{-K-i+1}\in {\cal X}^{i}$
$$
p(y|X^0_{-K+1})=p(y|z^{-K}_{-K-i+1},X^0_{-K+1})
$$
provided $p(z^{-K}_{-K-i+1},X^0_{-K+1},y)>0$,
and $K(X^0_{-\infty})=\infty$ if there is no such $K$.
\end{definition}

\bigskip
\noindent
\begin{definition}
The stationary time series $\{X_n\}$ is said to be finitarily Markovian if
$K(X^0_{-\infty})$ is finite (though not necessarily bounded) almost surely.
\end{definition}

   This class includes of course all finite order Markov chains but also many
   other processes such as the finitarily determined processes of Kalikow, Katznelson and
   Weiss \cite{KKW92},
   which serve to represent all isomorphism classes of zero entropy processes.
   For some concrete examples that are not Markovian consider the following example:

   \begin{example}
\rm{
Let $\{M_n\}$ be any stationary and ergodic first order Markov chain with
finite or countably infinite
state space $S$.
Let $s\in S$ be an arbitrary state with $P(M_1=s)>0$. Now let $X_n=I_{\{M_n=s\}}$.
By Shields \cite{Sh96} Chapter I.2.c.1, the binary time series $\{X_n\}$
is  stationary and ergodic.
It is also finitarily Markovian. Indeed, the conditional probability
$P(X_1=1|X^0_{-\infty})$
does not depend on values
beyond the first (going backwards) occurrence of one  in $X^0_{-\infty}$
which identifies the first (going backwards) occurrence of state $s$ in the
Markov chain $\{M_n\}$. 
The resulting time series $\{X_n\}$ is not a Markov chain of any order in general.
Indeed, consider the Markov chain $\{M_n\}$ with state space $S=\{0,1,2\}$ and
transition probabilities
$P(M_2=1|M_1=0)=P(M_2=2|M_1=1)=1$, $P(M_2=0|M_1=2)=P(M_2=1|M_1=2)=0.5$. 
This yields
a stationary and ergodic Markov chain $\{M_n\}$, cf. 
Example I.2.8 in Shields \cite{Sh96}.
Clearly, the resulting time series  $X_n=I_{\{M_n=0\}}$ will not be 
Markov of any order.
The conditional probability
$P(X_1=0|X^0_{-\infty})$ depends on whether until the first (going backwards)
occurrence of one
you see
even or odd number of zeros.
These examples include all stationary and ergodic binary renewal processes with finite expected
inter-arrival times, a basic class for many applications.
(A stationary and ergodic binary renewal process is defined as
a stationary and ergodic binary process such that the times between occurrences of
ones  are independent and identically distributed with finite expectation, cf.
Chapter I.2.c.1 in Shields \cite{Sh96}).
}
\end{example}
We note that Morvai and Weiss \cite{B-MW05} proved that there is no classification rule
for discriminating the class of finitarily Markovian processes from other ergodic
processes.

  For the finitarily Markovian processes an important notion is that of a
  \textbf{memory word} which is defined as follows.

\begin{definition}
We say that $w^0_{-k+1}$ is a memory word if $p(w^0_{-k+1})>0$ and 
for all $i\ge 1$, all $y\in {\cal X}$, all $z^{-k}_{-k-i+1}\in {\cal X}^{i}$
$$
p(y|w^0_{-k+1})=p(y|z^{-k}_{-k-i+1},w^0_{-k+1})
$$
provided $p(z^{-k}_{-k-i+1},w^0_{-k+1},y)>0$.
\end{definition}

\noindent
Define the set ${\cal W}_k$ of those memory words $w^0_{-k+1}$ with length $k$, that is,
$$
{\cal W}_k=\{w^0_{-k+1}\in {\cal X}^k: \ w^0_{-k+1} \ \mbox{is a memory word}\}.
$$
 Our first result is a solution of  the
 backward estimation problem, namely determining the value of $K(X^0_{-\infty})$
from  observations of increasing length of the data segments $X_{-n}^0$.  We will
give in the next section a universal consistent estimator  which will converge
almost surely to the memory length $K(X^0_{-\infty})$ for any ergodic finitarily
Markovian process on a countable state space. The proofs that we give are
pretty explicit and
given some information on the
average length of a memory word and the extent to which the stationary
distribution diffuses over the state space one could extract rates for the convergence
of the estimators from our estimates. We concentrate however, on the more universal
aspects of the problem.

\smallbreak

As is usual in these kinds
of questions , the problem of forward estimation, namely trying to determine
$K(X^n_{-\infty})$ from successive observations of $X_0^n$ is  more difficult.
  The stationarity means that results in probability can be carried over
  automatically. However, almost sure results present serious problems.
For example, while Ornstein in \cite{Orn78} (cf. Morvai et. al.  \cite{MYGY96} also)  
showed that there is a universal consistent
estimator for the conditional probability of $X_1$ 
given $X_{-\infty}^0$
based on successive observations of the past,
  Bailey
  \cite{Bailey76} showed that one simply cannot
   estimate the forward conditional probabilities in  a similar universal way.
   One can obtain results modulo a zero density set of moments,
   but if one wants to be sure
   that when one is giving an estimate that eventually the estimate
   converges one is forced to resort to estimating along a sequence of
   stopping times (cf Morvai \cite{Mo00}, Morvai and Weiss \cite{MW03}, \cite{T-MW04}, 
\cite{TSP-MW05}).
   For some more results in this circle
of ideas of what can be learned about processes by
forward observations see Ornstein and Weiss \cite{OW90},
Dembo and Peres \cite{DP-1994}, Nobel \cite{N-1999}, and Csisz\'ar 
\cite{Cs02}.

Recently in Csisz\'ar and Talata \cite{CsT} the authors define a finite context to be a memory word $w$ of minimal length, that is, 
no proper suffix of $w$ is a memory word. An infinite context for a process is an infinite string with all finite suffix 
having positive probability but none of them being a memory word. They treat there the problem of estimating 
the entire context tree in case the size of the alphabet is finite. 
For a bounded depth context tree, the process is Markovian, 
while for an unbounded depth context tree the universal pointwise consistency result there is obtained only for the truncated trees 
which are again finite in size. This is in contrast to our results which deal with infinite alphabet size and consistency in estimating 
memory words of arbitrary length. This is what forces us to consider estimating at specially chosen times.

In the succeeding two sections $\S 3,4 $ we will present two such schemes
which depend upon a positive parameter $\epsilon$, and
we guarantee that sequence  of times along which the estimates are being 
given
have density at least $1- \epsilon$. The purpose of the next two sections
is to show that this result is sharp in that
the $\epsilon$ cannot be removed even in more restricted classes of 
processes.
In $\S 5$ we show that you cannot achieve density one in forward estimation
of the memory in the class of Markov chains on countable alphabets, while
in $\S 6$ we prove a similar negative result for binary valued finitarily
Markovian processes.

  The last part of the paper is devoted to seeing how this memory length
  estimation can be applied to estimating conditional probabilities. In $\S 7$
  we do this for finitarily Markovian processes along a sequence of stopping
  times which achieve density $1 - \epsilon$. We do not know if the $\epsilon$
  can be dropped in this case for the estimation of conditional probabilities.

  We can dispense with $\epsilon$
  in the Markovian case. In $\S 8$  we use an earlier
  result of ours on a universal estimator for the order of a
  finite order Markov chain on a countable alphabet in order to estimate the conditional
  probabilities along a sequence of stopping times of density one.

\section{Backward Estimation of the Memory Length for Finitarily Markovian Processes}

\label{chpbackward}

\bigskip
\noindent
In order to estimate $K({X}^0_{-\infty})$ we need to define some explicit statistics.
The first is a  measurement of the failure  of ${w}^0_{-k+1}$ to be a memory
word.

\smallskip
\noindent
For ${w}^0_{-k+1}$ of positive probability define
\begin{eqnarray*}
\lefteqn{
\Delta_k({w}^0_{-k+1})=}\\
&& \sup_{1\le i}
\sup_{\{ z^{-k}_{-k-i+1}\in {\cal X}^i, x\in {\cal X}  :
p(z^{-k}_{-k-i+1},{w}^0_{-k+1},x)>0 \} }
\left| p(x| {w}^0_{-k+1})- p(x|z^{-k}_{-k-i+1} ,{w}^0_{-k+1})\right|.
\end{eqnarray*}

\noindent
Clearly this will vanish precisely when ${w}^0_{-k+1}$ is a memory word. We
need to define an empirical version of this based on the observation
of a finite data segment $X_{-n}^{0}$. To this end first
define the empirical version of the conditional probability as
$$
{\hat p}_n(x|w^0_{-k+1})=
{ \#\{-n+k-1 \le t\le -1: X^{t+1}_{t-k+1}=({w}^0_{-k+1},x)\}\over
\#\{-n+k-1 \le t\le -1: X^t_{t-k+1}={w}^0_{-k+1}\}} .
$$
These empirical distributions, as well as the
sets we are about to introduce are functions of $X^0_{-n}$,
but we suppress the dependence to keep the notation
manageable.

\noindent
For a fixed  $0<\gamma<1$  let ${\cal L}_{k}^n$ denote the
set of strings with length $k+1$  which appear
more than $n^{1-\gamma}$ times in $X_{-n}^{0}$. That is,
$${\cal L}_{k}^n=\{x^0_{-k}\in {\cal X}^{k+1}:
\#\{-n+k\le t\le 0: X^t_{t-k}=x^0_{-k}\} > n^{1-\gamma}\}.$$

 Finally,  define the empirical version of $\Delta_k$ as follows:
$$
{\hat \Delta}^n_k({w}^0_{-k+1})=\max_{1\le i \le n}
\max_{(z^{-k}_{-k-i+1},{w}^0_{-k+1},x)\in {\cal L}^n_{k+i} }
  \left|
{\hat p}_n(x|w^0_{-k+1})-
{\hat p}_n(x|z^{-k}_{-k-i+1},{w}^0_{-k+1})\right|
$$

Let us  agree by convention that if the smallest of the sets
over which we are maximizing is empty then $\hat  \Delta^n_k = 0$.
Observe, that by ergodicity, the ergodic
theorem implies that almost surely the empirical distributions $\hat p$ converge
to the true distributions $p$ and so
 for any $w^0_{-k+1}\in {\cal X}^k$,

$$
\liminf_{n\to\infty}{\hat \Delta}^n_k(w^0_{-k+1})\ge \Delta_k(w^0_{-k+1})
 \ \ \mbox{almost surely.}
$$
\bigskip
\noindent
With this in hand we can give   a test for $w^0_{-k+1}$ to be a memory word.
Let $0< \beta <{1-\gamma \over 2}$ be  arbitrary.
Let $NTEST_n(w^0_{-k+1})=YES$ if ${\hat \Delta }^n_{k}(w^0_{-k+1})\le n^{-\beta}$ and $NO$ otherwise.
Note that $NTEST_n$ depends on $X^0_{-n}$.

\begin{theorem}
\label{ntestthm}
Eventually almost surely,
$NTEST_n(w^0_{-k+1})=YES$ if and only if $w^0_{-k+1}$ is a memory word.
\end{theorem}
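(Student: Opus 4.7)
My plan is to establish the two implications of the equivalence separately; both rely on the choice $\beta < (1-\gamma)/2$, but in opposite ways. For the direction ``$w^0_{-k+1}$ not a memory word $\Rightarrow$ eventually $NTEST_n = NO$'', the argument is essentially a direct application of the ergodic theorem. If $w^0_{-k+1}$ is not a memory word then $\Delta_k(w^0_{-k+1}) > 0$, so we may fix specific $i$, $z^{-k}_{-k-i+1}$, and $x$ realizing a strictly positive discrepancy $|p(x|w) - p(x|z,w)| > 0$ with $p(z,w,x)>0$. Ergodicity says this string has asymptotic frequency $p(z,w,x)$, which swamps $n^{1-\gamma}$, so $(z,w,x)\in \mathcal{L}^n_{k+i}$ for all large $n$. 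Since $\hat p_n \to p$ on both relevant conditional probabilities, $\hat\Delta^n_k(w) \ge |\hat p_n(x|w) - \hat p_n(x|z,w)|$ converges to a positive limit, which eventually exceeds the vanishing threshold $n^{-\beta}$.

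For the reverse direction ``$w^0_{-k+1}$ a memory word $\Rightarrow$ eventually $NTEST_n = YES$'', the key observation is that, precisely because $w$ is a memory word, the errors $\hat p_n(x|z,w) - p(x|w)$ (and $\hat p_n(x|w) - p(x|w)$) admit a clean martingale description at the successive stopping times $T_1<T_2<\cdots$ at which the extension $(z,w)$ (respectively $w$) appears in the data. Conditional on the history up to $T_j$, the next symbol has distribution $p(\cdot|w)$ no matter what precedes $w$, by the memory-word property together with the tower property. Hence the centered indicators $I(X_{T_j+1}=x) - p(x|w)$ are bounded martingale differences, and Azuma--Hoeffding applied to their partial sums yields
$$
P\bigl(|\hat p_n(x|z,w) - p(x|w)| \ge \tfrac{1}{2}n^{-\beta},\ N_n(z,w)\ge n^{1-\gamma}\bigr) \le C \exp\bigl(-c\, n^{1-\gamma - 2\beta}\bigr),
$$
with the exponent positive by the hypothesis $\beta < (1-\gamma)/2$.

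To convert this into an almost-sure statement I would take a union bound over the finite set of candidate extensions in $\mathcal{L}^n$. Each string counted in $\mathcal{L}^n_{k+i}$ occurs at least $n^{1-\gamma}$ times among $n$ positions, so $|\mathcal{L}^n_{k+i}| \le n^\gamma$, and summing over $i\le n$ yields at most $n^{1+\gamma}$ strings. The total failure probability is then bounded by $n^{1+\gamma}\cdot C\exp(-c\, n^{1-\gamma-2\beta})$, still summable in $n$. Borel--Cantelli guarantees that for almost every sample path, $\hat p_n(x|w)$ and every $\hat p_n(x|z,w)$ with $(z,w,x)\in \mathcal{L}^n_{k+i}$ lie within $n^{-\beta}/2$ of their common true value $p(x|w)$ for all sufficiently large $n$, whence $\hat\Delta^n_k(w) \le n^{-\beta}$ eventually.

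The main obstacle — and the reason the theorem is non-trivial — is that no uniform mixing rates are available for countable-alphabet processes, as stressed in the introduction. The argument sidesteps this by exploiting the memory-word property itself: at the stopping times $T_j$, the conditional distribution of the next symbol equals $p(\cdot|w)$ without invoking any mixing, which is what makes exponential concentration available via a pure martingale argument. The delicate technical point is matching the exponent in the concentration estimate against the polynomial growth of $|\mathcal{L}^n|$; this is exactly the role of the parameter constraint $\beta < (1-\gamma)/2$.
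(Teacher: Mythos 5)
Your first direction (not a memory word $\Rightarrow$ eventually $NO$) coincides with the paper's argument, and your martingale observation for the converse is sound: at the successive occurrence times of an extension $(z,w)$ of a memory word $w$, the conditional law of the next symbol given the whole past is $p(\cdot|w)$, so Azuma--Hoeffding gives the same exponential bound that the paper extracts from its conditional i.i.d.\ lemma (Lemma~\ref{iidlemma}) plus Hoeffding. The gap is in the union-bound step. The set ${\cal L}^n_{k+i}$ is a \emph{random, data-dependent} collection of strings, so the inequality ``failure probability $\le |{\cal L}^n|\cdot\max_s P(\mbox{bad for }s)$'' is not a legitimate union bound: the strings over which you maximize are only known after seeing $X_{-n}^0$, and you cannot condition on membership in ${\cal L}^n$ without destroying the martingale structure you just used. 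Done honestly, the union must run over \emph{all} candidate strings $(z^{-k}_{-k-i+1},w^0_{-k+1},x)$ in the countable alphabet, i.e.\
$P(\exists s\in{\cal L}^n:\mbox{bad}(s))\le\sum_{s}P(s\in{\cal L}^n,\ \mbox{bad}(s))$,
and since your per-string bound $C\exp(-c\,n^{1-\gamma-2\beta})$ is uniform in $s$, this sum is infinite. This is exactly the countable-alphabet difficulty the introduction warns about, and it also affects the letter $x$, which is likewise selected from the data.

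The paper's way around it is to index the union by \emph{locations} in the sample rather than by strings: in Lemmas~\ref{keylemma1} and~\ref{keylemma} one bounds the probability that for some pair $(k,l)$ with $0\le k<n$, $-n+k-1\le l\le -1$, the string actually occupying positions $(l-k,l+1]$ lies in ${\cal L}^n_{k+1}$, ends in a memory word, and has a deviant empirical conditional probability. For each fixed location one conditions on whichever word-and-letter pair $(w^0_{-k+1},x)$ sits there (this is why Lemma~\ref{iidlemma} conditions on $X^l_{l-k+1}=w^0_{-k+1}$ \emph{and} $X_{l+1}=x$), applies the Hoeffding bound, and then sums over all possible $(w,x)$ weighted by $P(X_{l-k+1}^{l+1}=w^0_{-k+1}x)$, which totals at most one. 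The countable sum over strings is thus absorbed into probabilities, and only the polynomially many pairs $(k,l)$ and occurrence counts $(i,j)$ with $i+j\ge\lfloor n^{1-\gamma}\rfloor$ remain, giving the summable bound $n^2\sum_{h\ge\lfloor n^{1-\gamma}\rfloor}2h\,e^{-2n^{-2\beta}h}$ under $2\beta+\gamma<1$. Your proof needs this (or an equivalent weighting of each string by its probability of ever appearing) in place of the cardinality bound $|{\cal L}^n|\le n^{1+\gamma}$; with that repair, the rest of your argument goes through.
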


\bigskip
\noindent
We define an estimate $\chi_n$ for $K({X}^0_{-\infty})$  from samples
$X_{-n}^{0}$ as follows.
Set $\chi_0=0$, and for $n\ge 1$
let  $\chi_n$ be the smallest
$0\le k< n $ such that
$NTEST_n(X^0_{-k+1})=YES$ if there is such and $n$ otherwise.

\begin{theorem}
\label{thmkbackcons}
$\chi_n=K({ X}^0_{-\infty})$ eventually almost surely.
\end{theorem}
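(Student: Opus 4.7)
The plan is to reduce Theorem~\ref{thmkbackcons} to Theorem~\ref{ntestthm} by checking that $\chi_n$ equals $K:=K(X^0_{-\infty})$ from both sides: $\chi_n\le K$ because $X^0_{-K+1}$ is itself a memory word and will eventually pass $NTEST_n$, and $\chi_n\ge K$ because none of the strictly shorter observed prefixes $X^0_{-k+1}$, $0\le k<K$, can be memory words (by minimality of $K$) and so will eventually be rejected.

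The one point requiring care is that Theorem~\ref{ntestthm} is stated as ``eventually almost surely'' for a \emph{fixed} word $w^0_{-k+1}$, whereas $\chi_n$ involves $NTEST_n$ evaluated on the \emph{random} prefixes $X^0_{-k+1}$. My first step would be to lift Theorem~\ref{ntestthm} to a uniform statement: since $\mathcal X$ is at most countable, $\bigcup_{k\ge 0}\mathcal X^k$ is countable, and intersecting the countably many full-measure events from Theorem~\ref{ntestthm} produces a single full-measure event $\Omega_1$ on which, for every finite word $w$ simultaneously, eventually $NTEST_n(w)=YES$ iff $w$ is a memory word. On $\Omega_1\cap\{K<\infty\}$ (which has probability one because the process is finitarily Markovian) each of the finitely many random strings $X^0_{-k+1}$, $0\le k\le K$, is therefore classified correctly for all large $n$.

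From this uniform classification both inequalities are immediate. Definition~1 says that $X^0_{-K+1}$ satisfies the memory-word condition, and it has positive probability since it was actually observed, so Definition~3 identifies it as a memory word; hence $NTEST_n(X^0_{-K+1})=YES$ eventually, giving $\chi_n\le K$. Conversely, if some $X^0_{-k+1}$ with $k<K$ were a memory word the minimality in Definition~1 would be violated, so each such shorter prefix is eventually rejected, forcing $\chi_n\ge K$. The main (and essentially only) obstacle is the uniformity issue just described; once it is dispatched by the countable-intersection trick, the conclusion follows directly from the definition of $\chi_n$.
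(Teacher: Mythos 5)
Your proposal is correct and follows essentially the same route as the paper, whose one-line proof simply notes that $X^0_{-K+1}$ is a memory word while none of its proper suffixes is, and then invokes Theorem~\ref{ntestthm}. Your extra step---intersecting the countably many full-measure events to handle the random prefixes---merely makes explicit what the paper leaves implicit, so there is no substantive difference.
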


\bigskip
\noindent
In order to prove these theorems we need some lemmas.
The first is a variant of the simple fact that the states $u_i$ that
follow the successive occurrences of a fixed memory word $w$
are independent and identically distributed random variables.
We cannot use such a naive version because we are dealing with a countable
alphabet, and thus even the collection  of memory words of a fixed
length is infinite. In order to cut down to a manageable set we would like
to consider only those words that appear in the sample $X_{-n}^0$, but
now the independence becomes a little subtler. This is the reason
for the rather forbidding looking formulas in the proof of the next lemma.
What we do is fix a location $(l-k,l]$ in the index set and then fix
a memory word $w_{-k+1}^0$ that occurs there together with a particular state $x$
that follows it. The random times $l+\lambda^{+}_{\cdot}$ and $l-\lambda^{-}_{\cdot}$ are the other
occurrences of this memory word in the process. Here is the
formal definition.
Set $\lambda_{l,k,0}^+=0$,  $\lambda_{l,k,0}^-=0$ and define
\begin{equation}\label{poslambda}
\lambda_{l,k,i}^+=
\lambda_{l,k,i-1}^+ +
\min\{t>0: X^{l+\lambda_{l,k,i-1}^+ +t}_{l+\lambda_{l,k,i-1}^+-k+1 +t}=
X^{l+\lambda_{l,k,i-1}^+}_{l+\lambda_{l,k,i-1}^+ -k+1}\}
\end{equation}
and
\begin{equation}\label{neglambda}
\lambda_{l,k,i}^-=
\lambda_{l,k,i-1}^- +
\min\{t>0: X^{l-\lambda_{l,k,i-1}^- -t}_{l-\lambda_{l,k,i-1}^- -k+1-t}=
X^{l- \lambda_{l,k,i-1}^-}_{l- \lambda_{l,k,i-1}^- -k+1}\}
\end{equation}

\begin{lemma} \label{iidlemma} Assume $w^0_{-k+1}$ is a memory word and $x$ is a letter. Then for any $i,j\ge 1$, 
$$X_{l-\lambda_{l,k,i}^-+1},\dots,X_{l-\lambda_{l,k,1}^-+1},
X_{l+\lambda_{l,k,1}^+ +1},\dots,X_{l+\lambda_{l,k,j}^+ +1} $$
are conditionally  independent and identically distributed random 
variables given $X_{l-k+1}^l=w^0_{-k+1},X_{l+1}=x$, where the identical 
distribution is $p(\cdot|w^0_{-k+1})$.
 \end{lemma}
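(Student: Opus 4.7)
The plan is to reduce everything to a single predictive property of a memory word. The definition of a memory word extends (by standard martingale convergence applied to the increasing family $\sigma(X^t_{t-k-i+1})$ as $i \to \infty$) from finite to infinite pasts: for every integer $t$ and almost every $\omega$,
$$P(X_{t+1} = y \mid X^t_{-\infty}) = p(y \mid w^0_{-k+1})$$
holds almost surely on the event $\{X^t_{t-k+1} = w^0_{-k+1}\}$. This one-step predictive rule is the only process-specific fact I would use, and I would apply it at every occurrence time of $w^0_{-k+1}$.

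I would then fix deterministic integers $\tau_{-i}<\cdots<\tau_{-1}<l<\tau_1<\cdots<\tau_j$ and evaluate the joint probability of the event that these are precisely the positions of the nearest $i$ backward and nearest $j$ forward occurrences of $w^0_{-k+1}$ relative to $l$ (i.e.\ $\lambda^-_{l,k,p}=l-\tau_{-p}$ and $\lambda^+_{l,k,p}=\tau_p-l$), intersected with $\{X^l_{l-k+1}=w^0_{-k+1},\ X_{l+1}=x,\ X_{\tau_{\pm p}+1}=y_{\pm p}\ \forall p\}$. Expanding this probability by the chain rule forward in time, from a far-past position through $\tau_j+1$, each occurrence site $\tau_p$ contributes exactly the factor $p(y_p\mid w^0_{-k+1})$ by the displayed predictive rule, while every remaining factor encodes either a symbol lying in a gap between consecutive occurrences or the negative condition that $w^0_{-k+1}$ does \emph{not} appear at any intermediate position, and none of these remaining factors involves the $y_p$'s. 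Summing over all admissible intermediate symbols and over all admissible grids $(\tau_{\pm p})$ then factors the joint conditional probability as $\prod_p p(y_p\mid w^0_{-k+1})$, which is precisely the conditional i.i.d.\ statement of the lemma.

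The forward half of this computation is essentially a strong-Markov style argument: the $\lambda^+_{l,k,p}$ are stopping times for the natural forward filtration $\mathcal{F}_t=\sigma(X^t_{-\infty})$, and one could alternately prove it by induction on $j$ using the tower property and the predictive rule at each stopping time. The real obstacle is the backward half, where $\lambda^-_{l,k,p}$ is \emph{not} a stopping time in any forward filtration. The chain-rule decomposition circumvents this by conditioning on the specific numerical values of the $\lambda^-_{l,k,p}$, thereby converting the random location of a past occurrence into a deterministic event at which the memory-word rule can be applied in the forward time direction. The only delicate bookkeeping is arranging the ``no occurrence of $w^0_{-k+1}$ in each gap'' events so that they factor away cleanly without entangling the tracked letters $X_{\tau_p+1}$; this works because each such gap event depends only on symbols other than those tracked letters.
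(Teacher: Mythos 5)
There is a genuine gap at the central step. After you fix the grid $(\tau_{\pm p})$, the chain rule does give the factor $p(y_p\mid w^0_{-k+1})$ at each tracked position, but your claim that ``none of the remaining factors involves the $y_p$'s'' (equivalently, that each ``no occurrence of $w^0_{-k+1}$ in the gap'' event ``depends only on symbols other than the tracked letters'') is false, and with it the grid-by-grid factorization. Two reasons: (i) occurrences of $w^0_{-k+1}$ ending within $k$ steps after $\tau_p$ physically contain the symbol $X_{\tau_p+1}$, so the admissibility/non-occurrence constraints constrain $y_p$ itself --- e.g.\ for the renewal-type example with $w=1$, $k=1$, fixing consecutive occurrence positions $\tau_p,\tau_{p+1}$ forces $y_p=0$ if $\tau_{p+1}>\tau_p+1$ and $y_p=1$ if $\tau_{p+1}=\tau_p+1$; (ii) even without overlaps, the memory-word property controls only the \emph{next single} symbol, not the whole future ($wy$ need not be a memory word), so in a non-Markov finitarily Markovian process the conditional probabilities of the gap symbols and of the later occurrence pattern, given the past, genuinely depend on the earlier tracked letters and on the pre-$w$ past. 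Consequently, for a fixed grid the probability is $\prod_p p(y_p\mid w)\cdot S(\mathrm{grid},x,y)$ with $S$ depending on the $y_p$'s; that $\sum_{\mathrm{grid}}S$ is free of the $y_p$'s is exactly the assertion of the lemma, so your argument assumes what is to be proved. The same problem blocks the suggested repair of the backward half: at a backward occurrence time $\tau_{-p}$ the event still contains constraints on the \emph{future} of $\tau_{-p}$ (gaps, later occurrences, later tracked letters), so the one-step predictive rule cannot be applied there factor by factor.

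The paper avoids this in two moves that your proposal lacks. First, it conditions on $\lambda^-_{l,k,i}=t$ and uses stationarity to shift by $t$, so that \emph{all} tracked letters sit after \emph{forward} occurrences of the word measured from the shifted origin (this is the real role of the shift, not mere bookkeeping). Second, the ``telescoping'' peels off only the time-wise \emph{last} tracked letter at each stage: summing over all cylinder configurations of everything earlier, the memory-word property gives the factor $p(\cdot\mid w)$ for that last letter uniformly over cylinders, and the remaining event (after discarding the a.s.\ recurrence condition) no longer mentions it; induction finishes the product, and a short additional computation identifies the one-dimensional marginals. Your ``tower property at the stopping times $\lambda^+$'' remark is essentially this peeling and is fine for the forward letters; the backward letters are precisely where it cannot be run without the stationarity shift (or an equivalent device), and that is the part your write-up does not actually prove.
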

\proof
   Fix the values $z_{-i},\dots, z_{-1},u_1,\dots,u_j$ and $x$ in the alphabet and calculate

\begin{eqnarray*}
\lefteqn{
P(X_{l-\lambda_{l,k,i}^- +1}=z_{-i},\dots,X_{l-\lambda_{l,k,1}^- +1}=z_{-1},}\\
&&X_{l+\lambda_{l,k,1}^+ +1}=u_1,\dots, X_{l+\lambda_{l,k,j}^+ +1}=u_j|X^l_{l-k+1}=w^0_{-k+1},X_{l+1}=x)\\
&=&
P(X_{l-\lambda_{l,k,i}^- +1}=z_{-i},\dots,X_{l-\lambda_{l,k,1}^- +1}=z_{-1},
X_{l+\lambda_{l,k,1}^+ +1}=u_1,\dots,X_{l+\lambda_{l,k,j}^+ +1}=u_j,\\
&&X^l_{l-k+1}=w^0_{-k+1}X_{l+1}=x)
/ P(X^l_{l-k+1}=w^0_{-k+1},X_{l+1}=x)
\end{eqnarray*}
  In order to be able to use the fact that $w^0_{-k+1}$ is a memory word we
  will shift back to the first occurrence at $l - \lambda_{l,k,i}^-$ and use the stationarity.
\begin{eqnarray*}
\lefteqn{
P(X_{l-\lambda_{l,k,i}^- +1}=z_{-i},\dots,X_{l-\lambda_{l,k,1}^- +1}=z_{-1},
X_{l+\lambda_{l,k,1}^+ +1}=u_1.\dots,X_{l+\lambda_{l,k,j}^+
+1}=u_j}\\
&&,X^l_{l-k+1}=w^0_{-k+1},X_{l+1}=x,\lambda_{l,k,i}^- =t)\\
&=&
P(X_{l-t+\lambda_{l-t,k,i-i}^+ +1}=z_{-i},\dots,X_{l-t+\lambda_{l-t,k,i-1}^+ +1}=z_{-1},\\
&&
X_{l-t+\lambda_{l-t,k,i+1}^+ +1}=u_1,\dots,X_{l-t+\lambda_{l-t,k,i+j}^+
+1}=u_j,\\
&&
X^{l-t+\lambda_{l-t,k,i}^+}_{l-t+\lambda_{l-t,k,i}^+ -k+1}=w^0_{-k+1},
X_{l-t+\lambda_{l-t,k,i}^+ +1}=x,\lambda_{l-t,k,i}^+ =t)\\
&=&
P(T^{-t} \{X_{l-t+\lambda_{l-t,k,i-i}^+ +1}=z_{-i},\dots,X_{l-t+\lambda_{l-t,k,i-1}^+ 
+1}=z_{-1},\\
&&
X_{l-t+\lambda_{l-t,k,i+1}^+ +1}=u_1,\dots,X_{l-t+\lambda_{l-t,k,i+j}^+
+1}=u_j,X^{l-t+\lambda_{l-t,k,i}^+}_{l-t+\lambda_{l-t,k,i}^+ -k+1}=w^0_{-k+1},\\
&&X_{l-t+\lambda_{l-t,k,i}^+ +1}=x,\lambda_{l-t,k,i}^+
=t\})\\
&=&
P(X_{l+\lambda_{l,k,0}^+ +1}=z_{-i},\dots,X_{l+\lambda_{l,k,i-1}^+ +1}=z_{-1},
X_{l+\lambda_{l,k,i+1}^+ +1}=u_1,\dots,X_{l+\lambda_{l,k,i+j}^+
+1}=u_j,\\
&& X^{l+\lambda_{l,k,i}^+}_{l+\lambda_{l,k,i}^+ 
-k+1}=w^0_{-k+1},X_{l+\lambda_{l,k,i}^+ +1}=x,\lambda_{l,k,i}^+  =t)
\end{eqnarray*}
Summing over $t$,
\begin{eqnarray}
\lefteqn{\nonumber P(X_{l-\lambda_{l,k,i}^- 
+1}=z_{-i},\dots,X_{l-\lambda_{l,k,1}^- +1}=z_{-1},
X_{l+\lambda_{l,k,1}^+ +1}=u_1,\dots,X_{l+\lambda_{l,k,j}^+ +1}=u_j,}\\
&&\nonumber X^l_{l-k+1}=w^0_{-k+1}X_{l+1}=x)\\
&=&\nonumber
P(X_{l+\lambda_{l,k,0}^+ +1}=z_{-i},\dots X_{l+\lambda_{l,k,i-1}^+ +1}=z_{-1},
X_{l+\lambda_{l,k,i+1}^+ +1}=u_1,\dots,X_{l+\lambda_{l,k,i+j}^+
+1}=u_j,\\
&& 
\label{keybel}
X^{l+\lambda_{l,k,i}^+}_{l+\lambda_{l,k,i}^+ 
-k+1}=w^0_{-k+1},X_{l+\lambda_{l,k,i}^++1}=x).
\end{eqnarray}
Now telescoping the right hand side we get
\begin{eqnarray*}
\lefteqn{P(X_{l-\lambda_{l,k,i}^- +1}=z_{-i},\dots,X_{l-\lambda_{l,k,1}^- +1}=z_{-1},}\\
&&
X_{l+\lambda_{l,k,1}^+ +1}=u_1,\dots,X_{l+\lambda_{l,k,j}^+ +1}=u_j|X^l_{l-k+1}=w^0_{-k+1},X_{l+1}=x) \\
&=& P(X^0_{-k+1}=w^0_{-k+1})\prod_{h=1}^i 
P(X_1=z_{-h}|X^0_{-k+1}=w^0_{-k+1})
P(X_1=x|X^0_{-k+1}=w^0_{-k+1})\\
&\cdot& { \prod_{h=1}^j P(X_1=u_h|X^0_{-k+1}=w^0_{-k+1})\over
P(X^0_{-k+1}=w^0_{-k+1}) P(X_1=x|X^0_{-k+1}=w^0_{-k+1})}\\
&=& \prod_{h=1}^i P(X_1=z_{-h}|X^0_{-k+1}=w^0_{-k+1}) \prod_{h=1}^j 
P(X_1=u_h|X^0_{-k+1}=w^0_{-k+1}).
\end{eqnarray*}
We have to prove that
$$
P(X_1=z_{-h}|X^0_{-k+1}=w^0_{-k+1})=
P(X_{l-\lambda_{l,k,h}^- +1}=z_{-h}|X^l_{l-k+1}=w^0_{-k+1},X_{l+1}=x).
$$
Indeed, by ~(\ref{keybel}) and stationarity,
\begin{eqnarray*}
\lefteqn{ 
P(X_{l-\lambda_{l,k,h}^-+1}=z_{-h}|X^l_{l-k+1}=w^0_{-k+1},X_{l+1}=x)}\\
&=& {P(X_{l-\lambda_{l,k,h}^- 
+1}=z_{-h},X^{l-\lambda_{l,k,h}^-}_{l-\lambda_{l,k,h}^- 
-k+1}=w^0_{-k+1},X_{l+1}=x)\over P(X^l_{l-k+1}=w^0_{-k+1},X_{l+1}=x)}\\
&=& P(X^0_{-k+1}=w^0_{-k+1})
P(X_1=z_{-h}|X^0_{-k+1}=w^0_{-k+1})\\
&\cdot& {
P(X_{l+1}=x|X^l_{l-k+1}=w^0_{-k+1},X_{l-\lambda_{l,k,h}^- +1}=z_{-h})\over
P(X^0_{-k+1}=w^0_{-k+1})
P(X_{l+1}=x|X^l_{l-k+1}=w^0_{-k+1})}\\
&=&
P(X_1=z_{-h}|X^0_{-k+1}=w^0_{-k+1}).
\end{eqnarray*}
The proof of Lemma~\ref{iidlemma} is complete.

\begin{lemma} \label{keylemma1}
\begin{eqnarray*}
\lefteqn{
P\left( \ \mbox{For some } 0\le k<n, -n+k-1\le l\le -1: X_{l-k+1}^{l+1}\in {\cal L}^n_{k+1}, K(X^l_{-\infty})\le k,
\right.
}\\
&&\left. \left|{\hat p}_n(X_{l+1}|X_{l-k+1}^l)-p(X_{l+1}|X_{l-k+1}^l)\right|> n^{-\beta}\right)\\
&\le&
n^2 \sum_{h= \lfloor n^{1-\gamma}\rfloor }^{\infty} h 2 e^{-2 n^{-2\beta}h}.
\end{eqnarray*}
\end{lemma}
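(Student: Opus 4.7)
The plan is to bound the probability by reducing to an i.i.d.\ setting via Lemma~\ref{iidlemma} and then applying Hoeffding's inequality. A union bound over the pairs $(k,l)$ with $0\le k<n$ and $-n+k-1\le l\le -1$ gives at most $n^2$ choices, which accounts for the $n^2$ prefactor. Fix such a pair. On the event under consideration, $w:=X_{l-k+1}^{l}$ is a memory word (since $p(w)>0$ and $K(X^l_{-\infty})\le k$), and the hypothesis $X_{l-k+1}^{l+1}\in {\cal L}^n_{k+1}$ forces the numerator of $\hat p_n$ to exceed $n^{1-\gamma}$; since the denominator $h:=\#\{-n+k-1\le t\le -1:X^t_{t-k+1}=w\}$ dominates the numerator, we also have $h>n^{1-\gamma}$.

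Further decompose by the value of $h$ and by the list of positions $T_1<\cdots<T_h$ in $\{-n+k-1,\ldots,-1\}$ at which $w$ occurs; one of these positions, say $T_j$, equals $l$. Lemma~\ref{iidlemma}, applied around the center $l$ and combined with the identity $P(X_{l+1}=\cdot\mid X^l_{l-k+1}=w)=p(\cdot\mid w)$, implies that conditionally on $w$ and on $T_1,\ldots,T_h$ the successor letters $X_{T_1+1},\ldots,X_{T_h+1}$ are i.i.d.\ with distribution $p(\cdot\mid w)$. Since $\hat p_n(a\mid w)=\frac{1}{h}\sum_{i=1}^h I\{X_{T_i+1}=a\}$, Hoeffding's inequality yields, for each fixed letter $a$,
$$
P\bigl(|\hat p_n(a\mid w)-p(a\mid w)|>n^{-\beta}\;\bigm|\;w,\,T_1,\ldots,T_h\bigr)\ \le\ 2\,e^{-2 n^{-2\beta} h}.
$$

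The letter of interest is the random $X_{l+1}$, and since $X_{l+1}=X_{T_j+1}$ is itself one of the $h$ i.i.d.\ successor letters, it takes at most $h$ distinct values; a union bound over these at most $h$ candidate values of $a$ multiplies the Hoeffding bound by $h$. Summing the resulting per-$h$ contribution $h\cdot 2 e^{-2 n^{-2\beta} h}$ over $h\ge \lfloor n^{1-\gamma}\rfloor$, and then over the at most $n^2$ pairs $(k,l)$, yields the claimed bound.

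The main delicate step is justifying the conditional i.i.d.\ structure in the right generality: Lemma~\ref{iidlemma} is stated for a fixed center position and fixed numbers $i,j\ge 1$ of flanking occurrences, so to use it one must partition the probability according to the exact positions $T_1,\ldots,T_h$ of occurrences of $w$ inside the window and handle a few boundary cases (e.g.\ all occurrences on one side of $l$) before marginalizing over $X_{l+1}$ to obtain full i.i.d.\ symmetry. Once this conditioning bookkeeping is in place, the Hoeffding estimate and the union bound over at most $h$ candidate values of $X_{l+1}$ are routine.
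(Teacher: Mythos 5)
Your outline (reduce to i.i.d.\ via Lemma~\ref{iidlemma}, apply Hoeffding, union bound over the at most $n^2$ pairs $(k,l)$, sum over $h\ge\lfloor n^{1-\gamma}\rfloor$) matches the paper's, but the key technical step is wrong. You propose to condition on the exact occurrence positions $T_1<\cdots<T_h$ of $w$ in the window (equivalently, on the window count $h$) and claim that, given this, the successor letters $X_{T_1+1},\dots,X_{T_h+1}$ are i.i.d.\ with law $p(\cdot\mid w)$. Lemma~\ref{iidlemma} does not give this, and the claim is in general false: the return times are functions of the successor letters, so conditioning on them biases the successors. For a concrete failure, take $w$ a single memory letter $a$ in a first-order chain: if $T_{i+1}=T_i+1$ then necessarily $X_{T_i+1}=a$, while if $T_{i+1}>T_i+1$ then $X_{T_i+1}\neq a$; hence given the positions the successors are not i.i.d.\ $p(\cdot\mid a)$. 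The same objection applies to conditioning merely on ``there are exactly $h$ occurrences in the window.'' The ``bookkeeping'' you defer is precisely the step that cannot be carried out in the form you describe.

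The paper's proof avoids conditioning on positions or counts altogether. It conditions only on $X^{l+1}_{l-k+1}=w x$ (memory word at the fixed location plus its successor), fixes the numbers $i$ and $j$ of flanking occurrences to the left and right of $l$ (at the random times $\lambda^{\mp}_{l,k,\cdot}$), and applies Lemma~\ref{iidlemma} plus Hoeffding to the average of those $i+j$ successor indicators, getting $2e^{-2n^{-2\beta}(i+j)}$ for each fixed $(i,j)$. The event of the lemma is then covered by the union over all $(i,j)$ with $i+j\ge\lfloor n^{1-\gamma}\rfloor$, since on that event the empirical estimator coincides with such an average for some split; the factor $h$ inside the sum is exactly the number of splits $(i,j)$ with $i+j=h$, not a union over candidate letters. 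In your scheme the letter is already fixed by conditioning on $X_{l+1}=x$ (as in Lemma~\ref{iidlemma}), so no union over letters should be needed; attributing the factor $h$ to such a union both misallocates the budget and leaves no room for the genuinely needed union over the splits $(i,j)$. As written, the proposal has a real gap at its central step.
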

\proof
For a given $0\le k<n$, $-n+k-1\le l \le -1$ assume that $X_{l-k+1}^{l+1}=w^0_{-k+1} x$ and $w^0_{-k+1}$ is a memory
word.  Since $w^0_{-k+1}$ is a memory word,
by Lemma~\ref{iidlemma}
and  by Hoeffding's inequality (cf. Hoeffding \cite{Hoeffding63} or Theorem 8.1 of Devroye et. al. \cite{DGYL96}) for
 sums of bounded independent random variables implies
\begin{eqnarray*}
\lefteqn{
P\left( \left|{ \sum_{h=1}^i 1_{\{X_{l-\lambda_{l,k,h}^- +1}=x\}} +
 \sum_{h=1}^j 1_{\{X_{l+\lambda_{l,k,h}^+ +1}=x\}}
\over i+j}
-p(x|w^0_{l-k+1})\right|\right. }\\
&\ge& n^{-\beta}| \left. X_{l-k+1}^{l+1}=w^0_{-k+1}x \right) \le 2e^{-2n^{-2\beta}(i+j)}.
\end{eqnarray*}
Multiplying both sides by
$P(X_{l-k+1}^{l+1}=w^0_{-k+1}x)$ and summing over all possible memory words $w^0_{-k+1}$ 
and $x$
we get that
\begin{eqnarray*}
\lefteqn{
P\left( K(X_{-\infty}^{l})\le k, X_{l-k+1}^{l+1}\in {\cal L}_{k+1}^n,\right. }\\
&&\left.
\left|{ \sum_{h=1}^i 1_{\{X_{l-\lambda_{l,k,h}^- +1}=X_{l+1}\}} +
 \sum_{h=1}^j 1_{\{X_{l+\lambda_{l,k,h}^+ +1}=X_{l+1}\}}
\over i+j}
-p(X_{l+1}|X^l_{l-k+1})\right|
> n^{-\beta} \right)
\\
&\le& 2e^{-2n^{-2\beta}(i+j)}.
\end{eqnarray*}
Summing
 over all pairs $(k,l)$ such that $0\le k<n$ and all $-n+k-1\le l\le -1$
and  over all pairs $(i,j)$ such that
$i\ge 0$, $j\ge 0$, $i+j\ge \lfloor n^{1-\gamma}\rfloor $
we complete the proof of Lemma~\ref{keylemma1}.

\begin{lemma} \label{keylemma}
$$
P( \max_{w^0_{-k+1}\in {\cal W}_k}
{\hat \Delta}^n_k(w^0_{-k+1})>n^{-\beta})
\le  n^3 \sum_{h=\lfloor n^{1-\gamma}\rfloor }^{\infty} h4 e^{- n^{-2\beta}h\over 2}.
$$
\end{lemma}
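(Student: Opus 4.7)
The plan is to reduce the bound to Lemma~\ref{keylemma1} by a triangle inequality. The crucial structural observation is that if $w^0_{-k+1}$ is a memory word then any extension $(z^{-k}_{-k-i+1},w^0_{-k+1})$ is again a memory word: Definition~3 gives $p(\cdot|z^{-k}_{-k-i+1},w^0_{-k+1}) = p(\cdot|w^0_{-k+1})$, and the defining property for the longer word then follows from the defining property for $w^0_{-k+1}$. Consequently, $p(x|w^0_{-k+1})$ and $p(x|z^{-k}_{-k-i+1},w^0_{-k+1})$ coincide, and
$$|\hat p_n(x|w^0_{-k+1}) - \hat p_n(x|z^{-k}_{-k-i+1},w^0_{-k+1})| \le |\hat p_n(x|w^0_{-k+1}) - p(x|w^0_{-k+1})| + |\hat p_n(x|z^{-k}_{-k-i+1},w^0_{-k+1}) - p(x|z^{-k}_{-k-i+1},w^0_{-k+1})|.$$

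Next I apply a union bound over the two summands. If $\hat \Delta^n_k(w^0_{-k+1}) > n^{-\beta}$ for some memory word $w^0_{-k+1} \in \mathcal{W}_k$, then either (a) there is a memory word $w^0_{-k+1}$ and a letter $x$ with $(w^0_{-k+1},x) \in \mathcal{L}^n_{k+1}$ and $|\hat p_n(x|w^0_{-k+1}) - p(x|w^0_{-k+1})| > n^{-\beta}/2$, or (b) for some $1\le i \le n$ there is a memory word of length $k+i$, namely $(z^{-k}_{-k-i+1},w^0_{-k+1})$, with $(z^{-k}_{-k-i+1},w^0_{-k+1},x) \in \mathcal{L}^n_{k+i}$ and the analogous deviation exceeds $n^{-\beta}/2$. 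In both cases, whenever such a memory word occurs in the sample at some location $(l-k'+1,l]$, the hypothesis $K(X^l_{-\infty}) \le k'$ of Lemma~\ref{keylemma1} is satisfied with $k'=k$ in case (a) and $k'=k+i$ in case (b).

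I now run the proof of Lemma~\ref{keylemma1} verbatim with threshold $n^{-\beta}/2$ replacing $n^{-\beta}$. Lemma~\ref{iidlemma} and Hoeffding's inequality give the per-$(i,j)$ bound $2\exp(-2(n^{-\beta}/2)^2(i+j)) = 2\exp(-n^{-2\beta}(i+j)/2)$. For case (a), $k$ is fixed and $l$ ranges over at most $n$ values; for case (b), $k' = k+i$ ranges over $1 \le i \le n$ and $l$ ranges over at most $n$ values, contributing an additional factor of $n$ beyond case (a). Summing over pairs $(i,j)$ with $i+j = h \ge \lfloor n^{1-\gamma} \rfloor$ contributes the combinatorial factor $\sim h$ for each $h$. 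Collecting the factor $2$ from the union over cases (a) and (b), the factor $2$ from Hoeffding, and bounding $n + n \cdot n \le n^3$ crudely, one arrives at
$$P\!\left(\max_{w^0_{-k+1} \in \mathcal{W}_k} \hat\Delta^n_k(w^0_{-k+1}) > n^{-\beta}\right) \le n^3 \sum_{h=\lfloor n^{1-\gamma} \rfloor}^{\infty} h \cdot 4\exp\!\left(-\frac{n^{-2\beta} h}{2}\right).$$

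The only conceptual step is the observation that extensions of memory words are memory words, so that the i.i.d.\ structure supplied by Lemma~\ref{iidlemma} is available at every length $k' = k+i$ and Lemma~\ref{keylemma1} transfers directly. The rest is routine: a triangle inequality, Hoeffding applied to a halved threshold, and a union bound whose counting produces the $n^3$ prefactor.
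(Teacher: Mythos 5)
Your proposal is correct and follows essentially the same route as the paper: the triangle inequality exploiting $p(x|w^0_{-k+1})=p(x|z^{-k}_{-k-i+1},w^0_{-k+1})$ for a memory word, a union bound with the halved threshold $n^{-\beta}/2$, and a reduction to Lemma~\ref{keylemma1} (via Lemma~\ref{iidlemma} and Hoeffding) whose counting yields the prefactor $n^3$ and the constant $4$. The only difference is presentational: you make explicit the facts (extensions of memory words are memory words, $K(X^l_{-\infty})\le k\le k+i$ at occurrences) that the paper leaves implicit.
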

\proof
\begin{eqnarray*}
\lefteqn{
P( \max_{w^0_{-k+1}\in {\cal W}_k}
 {\hat \Delta}^n_k(w^{0}_{-k+1})> n^{-\beta})}\\
&\le&\sum_{i=1}^{n}
  P( \max_{w^0_{-k+1}\in {\cal W}_k}
  \max_{(z^{-k}_{-k-i+1},{w}^0_{-k+1},x)\in {\cal
L}^n_{k+i} }  \left|
{\hat p}_n(x|{w}^0_{-k+1})  -
{\hat p}_n(x| z^{-k}_{-k-i+1},{w}^0_{-k+1})\right| > n^{-\beta})\\
&\le&\sum_{i=1}^{n}
  P( \max_{w^0_{-k+1}\in {\cal W}_k}
  \max_{(z^{-k}_{-k-i+1},{w}^0_{-k+1},x)\in {\cal
L}^n_{k+i}
}  \left|
{\hat p}_n(x|{w}^0_{-k+1})  -
p(x|{w}^0_{-k+1})\right|  > n^{-\beta}/2)\\
&+&
\sum_{i=1}^{n}
  P( \max_{w^0_{-k+1}\in {\cal W}_k} \max_{(z^{-k}_{-k-i+1},{w}^0_{-k+1},x)\in {\cal
L}^n_{k+i} }  \left|
 p(x| z^{-k}_{-k-i+1},{w}^0_{-k+1})-
{\hat p}_n(x| z^{-k}_{-k-i+1},{w}^0_{-k+1})\right| \\
&&
> n^{-\beta}/2)
 \end{eqnarray*}
By Lemma~\ref{keylemma1}, both terms inside the sum can be upperbounded by an exponential,
and summing over $i$ we get the statement and so the proof of Lemma~\ref{keylemma} is complete.

\bigskip
\noindent
{\bf Proof of Theorem~\ref{ntestthm}: }

\noindent
If $w^0_{-k+1}$ is not a memory word, then
there are $z^{-k}_{-k-i+1}$ and $x$ such that
$p(x|w^0_{-k+1})\neq p(x|z^{-k}_{-k-i+1} w^0_{-k+1})$ and
$p(z^{-k}_{-k-i+1} w^0_{-k+1}x)>0$. By ergodicity,
$NTEST_n(w^0_{-k+1})=NO$ eventually almost surely.

\noindent
Assume  $w^0_{-k+1}$ is  a memory word.
We will estimate  the probability of the undesirable  event as follows:
By Lemma~\ref{keylemma},
$$
P({\hat \Delta}^n_k(w^0_{-k+1})> n^{-\beta})
\le
 n^3 \sum_{h=\lfloor n^{1-\gamma}\rfloor }^{\infty} h4 e^{- n^{-2\beta}h \over 2}.
$$
The right hand side is summable provided $2\beta+\gamma<1$ and
the Borel-Cantelli Lemma yields that
$$
P({\hat \Delta}^n_k(w^0_{-k+1})\le n^{-\beta}  eventually)=1
$$
and so $NTEST_n(w^0_{-k+1})=YES$ eventually almost surely.
The proof of Theorem~\ref{ntestthm} is complete.

\bigskip
\noindent
{\bf Proof of Theorem~\ref{thmkbackcons}: }
Since $X^0_{-K(X^0_{-\infty})+1}$ is a memory word and none of its suffixes has this property,
$\chi_n= K(X^0_{-\infty})$ eventually almost surely, by 
Theorem \ref{ntestthm} .
The proof of Theorem~\ref{thmkbackcons} is complete.

\section{Forward Estimation of the Memory Length for Finitarily Markovian Processes}

\noindent
Define $PTEST_n(w^0_{-k+1})(X^n_0)=NTEST_n(w^0_{-k+1})(T^n X^n_0)$ where $T$ is the left shift operator.

\begin{theorem}
\label{ptestthm}
Eventually almost surely,
$PTEST_n(w^0_{-k+1})=YES$ if and only if $w^0_{-k+1}$ is a memory word.
\end{theorem}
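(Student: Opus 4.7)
My plan is to reduce Theorem~\ref{ptestthm} directly to Theorem~\ref{ntestthm} by stationarity. By definition $PTEST_n(w^0_{-k+1})$ applied to $X_0^n$ equals $NTEST_n(w^0_{-k+1})$ applied to $T^n X_0^n$, and since $T$ preserves the measure, the joint distribution of every empirical quantity entering $PTEST_n$ built from the forward window $X_0^n$ coincides with the joint distribution of the corresponding quantity built from the backward window $X^0_{-n}$. This is the single observation on which the whole argument hinges.

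First I would handle the ``memory word implies $YES$'' direction. I would re-run Lemma~\ref{keylemma} (and through it Lemma~\ref{keylemma1} and Lemma~\ref{iidlemma}) on the shifted sample to obtain the same Hoeffding-type bound
\[
P(\hat\Delta^n_k(w^0_{-k+1}) > n^{-\beta}) \le n^3 \sum_{h=\lfloor n^{1-\gamma}\rfloor}^{\infty} 4h\, e^{-n^{-2\beta}h/2},
\]
with $\hat\Delta^n_k$ now computed from $X_0^n$. Under the assumption $2\beta + \gamma < 1$ the right hand side is summable in $n$, so the Borel-Cantelli lemma gives $PTEST_n(w^0_{-k+1}) = YES$ eventually almost surely whenever $w^0_{-k+1}$ is a memory word.

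For the opposite direction I would argue exactly as in Theorem~\ref{ntestthm}: if $w^0_{-k+1}$ is not a memory word, there exist $z^{-k}_{-k-i+1}$ and $x$ with $p(x|w^0_{-k+1}) \neq p(x|z^{-k}_{-k-i+1}, w^0_{-k+1})$ and $p(z^{-k}_{-k-i+1}, w^0_{-k+1}, x) > 0$. Applying the ergodic theorem to the indicator functions of these cylinders directly to the process, the empirical frequencies extracted from $X_0^n$ converge almost surely to the true probabilities, so $\hat\Delta^n_k(w^0_{-k+1})$ eventually exceeds $n^{-\beta}$ and $PTEST_n(w^0_{-k+1}) = NO$ eventually almost surely.

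The hard part, such as it is, is merely verifying that Lemma~\ref{iidlemma} continues to produce conditionally i.i.d.\ draws from $p(\cdot|w^0_{-k+1})$ when the base point $l$ is anchored inside the forward sample instead of the backward one. Since $\lambda^{\pm}_{l,k,i}$ are defined intrinsically from the sample path and the argument of Lemma~\ref{iidlemma} uses only stationarity plus the memory-word property of $w^0_{-k+1}$, this transfer is automatic, and the remaining chain of estimates is literally the proof of Theorem~\ref{ntestthm} with ``backward data'' replaced by ``forward data viewed through the shift''.
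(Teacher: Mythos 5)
Your argument is correct and is essentially the paper's own proof: the authors dispose of Theorem~\ref{ptestthm} by noting that the proof of Theorem~\ref{ntestthm} rests on distributional bounds plus Borel--Cantelli (and, for the converse direction, the ergodic theorem), so the shift $T^n$ in the definition of $PTEST_n$ changes nothing, exactly as you spell out. Your additional check that Lemma~\ref{iidlemma} and the Hoeffding bound of Lemma~\ref{keylemma} are location-independent is the same observation, just made explicit.
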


\noindent
Define a list of words $\{w(0),w(1),w(2),\dots,w(n),\dots\}$ such that all words of all 
lengths are listed and a word can not
precede its
suffix. Note that $w(0)$ is the empty word.

\noindent
Now define sets of indices $A^i_n$ as follows. Let $A^0_n=\{0,1,\dots,n\}$ and for $i>0$ define
\begin{equation}
A^i_n=\{ |w(i)|-1 \le j\le n: X^j_{j-|w(i)|+1}=w(i)\}.
\end{equation}
Let $\epsilon>0$ be fixed. Define $\theta_n(\epsilon)< n$ to be the minimal $j$ such that
\begin{equation}
{\left|\bigcup_{i\le j: PTEST_n(w(i))=YES} A^i_n\right|\over n+1}\ge 1-\epsilon/2
\end{equation}
and $n$ if no such $j$ exists.
We estimate for the length of the memory of $X^n_{-\infty}$ looking backwards if
$n\in \bigcup_{i\le
\theta_n(\epsilon),PTEST_n(w(i))=YES} A^i_n$.
The set of $n$'s for which this holds will be the set for which we estimate the memory and we denote this set by
${\cal N}$.  Note that
the event $n\in{\cal  N}$ depends only on $X^n_0$, and thus ${\cal N}$ can be thought of as a sequence of
stopping times.

\noindent
We define for $n\in {\cal N}$,
$$
\kappa_n=\min\{i\ge 0: X^n_{n-|w(i)|+1}=w(i), PTEST_n(w(i))=YES \}.
$$
For $n\in {\cal N}$ define
$$
\rho_{n}(X^n_0)=|w(\kappa_n)|.
$$
Note that $\rho_n$, $\theta_n$, $\kappa_n$ and ${\cal N}$  depend on $\epsilon$, however, we will not denote this
dependence
on $\epsilon$ explicitly.

\begin{theorem} \label{postheoremp} Let $\epsilon>0$ be fixed. Then for $n\in {\cal N}$,
\begin{equation}\label{conspartthmp}
\rho_{n}=K(X^{n}_{-\infty}) \ \mbox{eventually almost surely,}
\end{equation}
and
\begin{equation}\label{growthpartthmp}
\liminf_{n\to\infty} { \left| {\cal N}\bigcap \{0,1,\dots,n-1\} \right| \over n}\ge 1-\epsilon.
\end{equation}
For $n\in \cal N$, $X^n_{n-\rho_n+1}$ appears at least $n^{1-\gamma}$ 
times eventually almost surely.
\end{theorem}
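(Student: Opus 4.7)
The plan hinges on a bootstrap step: I first show that, almost surely, there is a finite (random) $J = J(\epsilon,\omega)$ with $\theta_n(\epsilon) \le J$ eventually. Define
$$g(J) := P\!\left(\exists\, i \le J :\ w(i) \text{ is a memory word and } w(i) = X^0_{-|w(i)|+1}\right).$$
Since every sample path has a memory-word suffix at time $0$ (namely $X^0_{-K(X^0_{-\infty})+1}$), $g(J) \to 1$ as $J \to \infty$; fix $J$ with $g(J) \ge 1-\epsilon/3$ and set $\mathcal F := \{w(0), w(1), \dots, w(J)\}$. Theorem~\ref{ptestthm} applied pointwise to each word in $\mathcal F$ (a finite intersection of full-measure sets) gives that, eventually a.s., $PTEST_n$ returns $YES$ exactly on the memory words in $\mathcal F$. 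The ergodic theorem applied to the indicators underlying each $A^i_n$ then shows the coverage $\tfrac{1}{n+1}\bigl|\bigcup_{i \le J,\, PTEST_n(w(i))=YES} A^i_n\bigr|$ converges a.s.\ to $g(J) \ge 1 - \epsilon/3 > 1 - \epsilon/2$, giving $\theta_n \le J$ eventually a.s.

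Granted this, write $K_n = K(X^n_{-\infty})$, $w_n^\star = X^n_{n-K_n+1}$, and let $I_n^\star$ be the enumeration-index of $w_n^\star$. For $n \in \mathcal N$, $\kappa_n \le \theta_n \le J$, so $w(\kappa_n) \in \mathcal F$; since $PTEST$ is eventually correct on $\mathcal F$, $w(\kappa_n)$ is a memory word and a suffix of $X^n_{-\infty}$ ending at $n$, so $|w(\kappa_n)| \ge K_n$. Then $w_n^\star$ is a suffix of $w(\kappa_n)$, and the list-ordering rule ``no word precedes its suffix'' gives $I_n^\star \le \kappa_n \le J$, so $w_n^\star \in \mathcal F$. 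Any competitor $w(i)$ with $i < I_n^\star$ also lies in $\mathcal F$; if further $w(i) = X^n_{n-|w(i)|+1}$, a short case analysis on $|w(i)|$ vs.\ $K_n$ (the case $|w(i)|=K_n$ contradicts $i<I_n^\star$ and $|w(i)|>K_n$ contradicts list-ordering) forces $w(i)$ to be a proper suffix of $w_n^\star$, hence a non-memory word by minimality of $K_n$. Since $PTEST_n$ eventually rejects all non-memory words in $\mathcal F$ and accepts $w_n^\star$, no competitor survives; therefore $\kappa_n = I_n^\star$ and $\rho_n = K_n$, establishing~(\ref{conspartthmp}).

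The final claim follows at once: $X^n_{n-\rho_n+1} = w_n^\star$ lies in the finite set $\mathcal F$, each element of which is a memory word of strictly positive stationary probability; by the ergodic theorem each such word occurs $\sim n\, p(w) \gg n^{1-\gamma}$ times in $X^n_0$, and the minimum over $\mathcal F$ yields the frequency bound. For~(\ref{growthpartthmp}), the coverage functional $f(j,n) := \tfrac{1}{n+1}\bigl|\bigcup_{i \le j,\, PTEST_n(w(i))=YES} A^i_n\bigr|$ converges a.s.\ to $g(j)$ for each $j$, so $\theta_n$ stabilizes (modulo measure-zero boundary cases) to $\theta^\star := \min\{j : g(j) \ge 1-\epsilon/2\} \le J$; then, eventually, ``$n \in \mathcal N$'' coincides with the stationary event ``some memory word of index $\le \theta^\star$ is a suffix of $X^n_{-\infty}$ ending at $n$'', of probability $g(\theta^\star) \ge 1 - \epsilon/2 \ge 1 - \epsilon$, and the ergodic theorem applied to this indicator yields $\liminf_n |\mathcal N \cap \{0,\dots,n-1\}|/n \ge 1 - \epsilon$.

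The main obstacle is that Theorem~\ref{ptestthm} gives eventually-correct classifications only for each \emph{fixed} word, with the $n$-threshold depending on the word; since $w_n^\star$ and its proper suffixes vary with $n$, the obvious attempt to intersect countably many such thresholds is insufficient. The bootstrap overcomes this by confining $\kappa_n$, and via list-ordering also $w_n^\star$ and all competitors below $I_n^\star$, to the finite set $\mathcal F$, so that only finitely many word-dependent thresholds need be aggregated simultaneously.
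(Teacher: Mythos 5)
Your proof is correct and follows essentially the same route as the paper's: you bound $\theta_n$ eventually by a fixed finite index via a finite family of memory words with high coverage together with Theorem~\ref{ptestthm} and the ergodic theorem, then use the suffix-ordering of the enumeration to identify $w(\kappa_n)$ with the true memory suffix $X^n_{n-K(X^n_{-\infty})+1}$, and obtain the density and frequency claims from the ergodic theorem applied to that finite family. The additional details you supply (the explicit competitor case analysis, and working with the union probability $g(j)$ in place of the paper's sum-based choice of $J$) just make explicit steps the paper treats tersely.
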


\bigskip
\noindent
{\bf Proof of Theorem~\ref{ptestthm}:}
\bigskip
\noindent
Since the proof of  Theorem~\ref{ntestthm}
was based on a Borel-Cantelli lemma, the time shift in defining $PTEST_n$ makes no difference and we literally
copy the proof of Theorem~\ref{ntestthm}.
The proof of Theorem~\ref{ptestthm} is complete.

\bigskip
\noindent
{\bf Proof of Theorem~\ref{postheoremp}:}
\bigskip
\noindent
There is a $N$ large enough such that
$$
P(K(X^0_{-\infty})<N)\ge 1-\epsilon/4.
$$
The sequence $\theta_n$ is bounded along individual sequences of the process with probability one. (This may be seen by first choosing a 
sufficiently large finite set $\{w(0),\dots,w(M)\}$ of memory words so that the probability of seeing at least one of them 
in position zero is greater then $1-\epsilon/4$ and  then applying 
Theorem~\ref{ptestthm} and the ergodic theorem we see that almost surely for all sufficiently large $n$,  $\theta_n\le M$. 
This implies of course that $\theta_n$ is bounded pointwise as claimed.)   
Thus  by Theorem~\ref{ptestthm},
$$
\rho_n=K(X^n_{-\infty}) \
\mbox{provided $X^n_{n-K(X^n_{-\infty})+1}\in {\cal W}_{K(X^n_{-\infty})}\bigcap\{w(0),\dots,w(\theta_n)\}$}
$$
eventually almost surely.We have proved the consistency.
Let $J$ denote the  smallest $j$  such that
$$
\sum_{k=0}^j  P( X^0_{-k+1}\in {\cal W}_k\bigcap \{w(0),\dots,w(j)\})\ge 1-{\epsilon\over 
2}.
$$
It is obvious from the definition above that
$$
\sum_{k=0}^{J-1} P( X^0_{-k+1}\in {\cal W}_k\bigcap \{w(0),\dots,w(J-1)\}) <
1-{\epsilon\over 2}.
$$
Thus $\theta_n\ge J$ eventually almost surely.
Thus
\begin{eqnarray*}
\lefteqn{
\liminf_{n\to\infty} { \left|{\cal N}\bigcap \{0,1,\dots,n\}\right| \over n+1} }\\
&\ge&
\liminf_{n\to\infty} {\left|\bigcup_{i=0}^J A^i_n\right| \over n+1}\ge 1-{\epsilon\over 2}\ \ \mbox{almost surely.}
\end{eqnarray*}
We have proved that ${\cal N}$ has density at least $1-\epsilon/2$.
Since $\theta_n$ is bounded, 
for $n\in {\cal N}$ eventually, $w(\kappa_n)$ appears at least $n^{1-\gamma}$ times.
The proof of Theorem~\ref{postheoremp} is complete

\section{Another Approach to Estimating the Memory Length for Finitarily Markovian Processes}

In the preceding section we made use of the fact that the proof that we gave for the backward memory estimator
was via a rough probability estimate and the Borel-Cantelli lemma. This enabled us to copy it directly for the
forward estimation. In this section we shall show that any successful backward memory estimator can be used to get
the same kind of result.
We will denote by $\chi_n$ some fixed consistent backward estimator for the memory length 
such as the $\chi_n$ of
\S~ \ref{chpbackward}.
To this end, based on the successive forward samples we construct many infinite sample points
of the $X^0_{-\infty}$ process.

\noindent
\bigskip
To construct a sample of the $X^0_{-\infty}$ process from the forward data segment $X_0^n$, we
use the procedure that we used in Morvai and Weiss \cite{PTRF-MW05}. Begin with $X_0$, 
then look for its first recurrence,
i.e. the minimum $t_0>0$ such that $X_{t_0}=X_0$ and then extend $X_0$ to the left by adding $X_{t_0-1}$. Next
look for the first recurrence of $X_{t_0-1} X_{t_0}$, in a position $t_1>t_0$, i.e. $X_{t_1-1}X_{t_1}=X_{t_0-1}X_{t_0}$
and then again extend to the left by adding $X_{t_1-2}$ obtaining $X_{t_1-2}X_{t_1-1}X_{t_1}$ as the first three
symbols of our sample for the backward process. We will denote this by ${\tilde X}^0_{-2}=X^{t_1}_{t_1-2}$. Continuing
in this way, we can develop  from $X^{\infty}_0$ a point ${\tilde X}^0_{-\infty}$ which we shall show has the same
distribution as $X^0_{-\infty}$.
We need to do this starting at each $i\ge 0$. Here are the formulas that accomplish this end.

\bigskip
\noindent
For $i=0,1,\dots$ define auxiliary  stopping times.   Set $\zeta_{-1}(i)=-i$ and  $\zeta_0(i)=0$.
For $n=1,2,\ldots$,  let
\begin{equation}\label{defzeta}
\zeta_n(i)=\zeta_{n-1}(i)+
\min\{t>0 : X_{i+\zeta_{n-1}(i)-(n-1)+t}^{i+\zeta_{n-1}(i)+t}=X_{i+\zeta_{n-1}(i)-(n-1)}^{i+\zeta_{n-1}(i)}\}.
\end{equation}

\smallskip
\noindent
Among other things, using  $\zeta_n(i)$ we can  define  very useful processes
$\{ {\tilde X}_n(i)\}_{n=-\infty}^{0}$
as a function of $X_0^{\infty}$ as follows.

\noindent
Define
\begin{equation}
\label{defprocesses}
{\tilde X}_{-n}(i)=X_{i+\zeta_{n}(i)-n}.
\end{equation}
It is clear that in this way we defined processes
$\{ {\tilde X}_n(i)\}_{n=-\infty}^{0}$. We will see
that the $\{{\tilde X}_n(i)\}_{n=-\infty}^0$ has the same
distribution as the original process, and for now assume that this is so.

\noindent
Let
\begin{equation}
\eta_n(i)=\max\{j\ge -1: i+\zeta_j(i)\le n\}
\end{equation}
Note that $({\tilde X}_{-\eta_n(i)}(i), \dots,{\tilde X}_0(i))$ is
measurable with respect to $X_0^n$.

\noindent
Define $\rho^i_n=\chi_{\eta_n(i)}({\tilde X}^0_{-\infty}(i))$ if $\eta_n(i)\ge 0$ and $\rho^i_n=0$ otherwise.
Note that $\rho^i_n$ is also measurable with respect to $X_0^n$.

\noindent
Define sets of indices $A^i_n$ as follows.
\begin{equation}
A^i_n=\{ \rho^i_n  \le j\le n: X^j_{j-\rho^i_n+1}={\tilde X}^{0}_{-\rho^i_n+1}(i)\}.
\end{equation}
For any fixed $i$, eventually, ${\tilde X}^0_{-\rho_n^i+1}(i)$ is a memory word, so the sets $A_n^i$ are simply
the places where this  fixed word occures.
Let $\epsilon>0$ be fixed. Define $\theta_n(\epsilon)$ to be the minimal $j$ such that
\begin{equation}
{\left|\bigcup_{i\le j} A^i_n\right|\over n}\ge 1-\epsilon/2.
\end{equation}
We estimate for the order of $X^n_{-\infty}$ looking backwards if $n\in \bigcup_{i\le \theta_n(\epsilon)} A^i_n$.
The set of $n$'s for which this holds will be the set for which we estimate the memory and we denote this set by
${\cal N}$.  Note that
the event $n\in{\cal  N}$ depends only on $X^n_0$, and thus ${\cal N}$ can be thought of as a sequence of
stopping times.

\noindent
In case $n\in \bigcup_{i\le \theta_n(\epsilon)} A^i_n$ we define
$$
\kappa_n=\min\{i\ge 0: {\tilde X}^0_{-\rho_n^i+1}(i)=X^{n}_{n-\rho_n^i+1}\}.
$$
Note that $\theta_n$, $\kappa_n$ and ${\cal N}$  depend on $\epsilon$, however, we will not denote this
dependence
on $\epsilon$ explicitly.

\begin{theorem} \label{postheorem} Let $\epsilon>0$ be fixed. Then for $n\in {\cal N}$,
\begin{equation}\label{conspartthm}
\rho_{n}^{\kappa_n}=K(X^{n}_{-\infty}) \ \mbox{eventually almost surely,}
\end{equation}
and
\begin{equation}\label{growthpartthm}
\liminf_{n\to\infty} { \left| {\cal N}\bigcap \{0,1,\dots,n-1\} \right| \over n}\ge 1-\epsilon.
\end{equation}
For $n\in \cal N$, $X^n_{n-\rho_n^{\kappa_n}}$ appears at least 
$n^{1-\gamma}$ times eventually almost surely.
\end{theorem}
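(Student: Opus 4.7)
The plan is to reduce Theorem~\ref{postheorem} to the consistency of $\chi_n$ (Theorem~\ref{thmkbackcons}) applied to each reconstructed sample $\tilde X^0_{-\infty}(i)$, combined with the ergodic theorem for $\{X_n\}$. The two facts I would invoke about the reconstruction (both from Morvai and Weiss~\cite{PTRF-MW05}) are that $\{\tilde X_n(i)\}_{n\le 0}$ has the same distribution as $\{X_n\}_{n\le 0}$ for every fixed $i$, and that $\eta_n(i)\uparrow\infty$ a.s.\ as $n\to\infty$ (since each $\zeta_j(i)$ is finite by recurrence of positive-probability strings). Consequently, for each fixed $i$, $\rho_n^i=K(\tilde X^0_{-\infty}(i))$ eventually almost surely, and the reconstructed suffix $\tilde X^0_{-\rho_n^i+1}(i)$ is eventually a memory word with the property that none of its proper suffixes is a memory word.

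To prove~(\ref{conspartthm}) fix $n\in{\cal N}$ and set $i=\kappa_n$, $\rho=\rho_n^i$. By the definitions of $A^i_n$ and $\kappa_n$, we have $\tilde X^0_{-\rho+1}(i)=X^n_{n-\rho+1}$ as elements of ${\cal X}^\rho$. Once $n$ is large enough that $\rho=K(\tilde X^0_{-\infty}(i))$ (which holds simultaneously for all $i\le\theta_n$, since $\theta_n$ is bounded, as I argue below), $\tilde X^0_{-\rho+1}(i)$ is a memory word while no proper suffix of it is. Since the property of being a memory word is a property of the string and of the process law, independent of location, the identical string $X^n_{n-\rho+1}$ inherits both properties, yielding $K(X^n_{-\infty})=\rho=\rho_n^{\kappa_n}$.

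For~(\ref{growthpartthm}) it suffices to show that $\theta_n$ is eventually bounded by a finite random constant, since then $|{\cal N}\cap\{0,\dots,n-1\}|/n\ge 1-\epsilon/2$ by the very definition of $\theta_n$. I would first choose a finite collection ${\cal W}'$ of memory words with $P(X^0_{-K(X^0_{-\infty})+1}\in{\cal W}')\ge 1-\epsilon/2$, possible because $K(X^0_{-\infty})$ is a.s.\ finite and the memory words form a countable family. The key step is to verify that, almost surely, for every $w\in{\cal W}'$ there is some (random) index $i$ with $|w|=K(\tilde X^0_{-\infty}(i))$ and $\tilde X^0_{-|w|+1}(i)=w$; this follows by expressing the event explicitly as a measurable function of the forward process $X_i^\infty$ and invoking ergodicity, given that the event has the same positive probability $P(X^0_{-|w|+1}=w,\,K(X^0_{-\infty})=|w|)$ for every $i$. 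Setting $J$ to be the maximum such index over $w\in{\cal W}'$, the union $\bigcup_{i\le J}A^i_n$ contains every $j\le n$ whose true backward memory word lies in ${\cal W}'$, a set of density $\ge 1-\epsilon/2$ by the ergodic theorem. Hence $\theta_n\le J$ eventually. The last assertion then follows because for large $n$ the suffix $X^n_{n-\rho_n^{\kappa_n}+1}$ is one of finitely many fixed memory words of positive probability, and hence its extension by the preceding symbol $X_{n-\rho_n^{\kappa_n}}$ has positive probability too and appears $\gg n^{1-\gamma}$ times in $X_0^n$ by the ergodic theorem. The main obstacle, requiring the most care, is the uniform ``hitting'' argument for the finite collection ${\cal W}'$, because the reconstructions $\tilde X^0_{-\infty}(i)$ for different $i$ are highly correlated functions of the same forward sample path.
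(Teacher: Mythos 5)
Your treatment of (\ref{conspartthm}) is essentially the paper's: Lemma~\ref{eqdistrlemma} plus Theorem~\ref{thmkbackcons} give $\rho^i_n=K(\tilde X^0_{-\infty}(i))$ for each fixed $i$, boundedness of $\theta_n$ makes this uniform over $i\le\theta_n$ (and $\kappa_n\le\theta_n$ for $n\in{\cal N}$), and the identification $K(X^n_{-\infty})=\rho_n^{\kappa_n}$ from the matching of the string is a correct filling-in of a detail the paper leaves implicit. Your ``hitting'' argument is also sound and is a legitimate substitute for the paper's Lemma~\ref{appearlemma}: the event that the reconstruction started at $i$ has minimal memory word $w$ is the $T^{-i}$-image of a fixed positive-probability event measurable with respect to $X_0^\infty$, so the ergodic theorem produces, almost surely, a finite random $J$ realizing all of ${\cal W}'$.

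The genuine gap is in (\ref{growthpartthm}). You claim that once $\theta_n$ is eventually bounded, $|{\cal N}\cap\{0,\dots,n-1\}|/n\ge 1-\epsilon/2$ ``by the very definition of $\theta_n$''. The definition of $\theta_n$ only guarantees that the union $\bigcup_{i\le\theta_n}A^i_n$ covers a $(1-\epsilon/2)$-fraction of $\{0,\dots,n\}$ \emph{as seen at time} $n$; but $m\in{\cal N}$ was decided at time $m$ and requires the word ending at $m$ to come from a reconstruction with index $\le\theta_m$. A position whose memory word is realized only by an index larger than $\theta_m$ is excluded from ${\cal N}$ even though the time-$n$ union covers it later, so the upper bound $\theta_n\le J$ that you prove does not transfer the coverage to ${\cal N}$; what is needed is a \emph{lower} bound on $\theta_m$. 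This is precisely why the paper introduces its (different) $J$, the least $j$ for which the cumulative probability of the reconstructed memory words with index $\le j$ reaches $1-\epsilon/2$, proves $\theta_n\ge J$ eventually (because the empirical coverage of $\bigcup_{i\le J-1}A^i_n$ converges to a limit below $1-\epsilon/2$), and only then applies the ergodic theorem to the fixed union $\bigcup_{i\le J}A^i_n$. Your ingredients can be patched (e.g.\ by a liminf argument on $\theta_m$, using that the word families $\{w_i:i\le j\}$ increase in $j$ and that the empirical coverage at level $\liminf_m\theta_m$ is $\ge 1-\epsilon/2$ along a subsequence), but that step is the heart of the density claim and is missing. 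Two smaller points: to get $\theta_n\le J$ eventually you should choose ${\cal W}'$ with coverage strictly above the threshold (the paper works with $1-\epsilon/4$), since a limiting coverage exactly equal to $1-\epsilon/2$ need not keep the empirical coverage above it; and for the final assertion the appeal to ``the extension by the preceding symbol'' fails over a countable alphabet, where $X^n_{n-\rho_n^{\kappa_n}}$ is not one of finitely many strings --- what the argument (and its use in \S 7) requires is only that the memory word $X^n_{n-\rho_n^{\kappa_n}+1}$, being one of finitely many fixed words of positive probability, occurs at least $n^{1-\gamma}$ times, which your preceding sentence already gives.
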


\begin{lemma} \label{eqdistrlemma}
For all $i$ the  time series
$\{{\tilde X}_n(i)\}_{n=-\infty}^0$ and
$\{X_n\}_{n=-\infty}^{0}$ have
identical  distribution.
\end{lemma}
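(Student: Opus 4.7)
The plan is to prove the lemma by induction on the length of the backward block. First, by the shift-invariance of $\{X_n\}_{n=-\infty}^{\infty}$ under $T$, the distribution of $\{\tilde X_n(i)\}_{n=-\infty}^{0}$ does not depend on $i$, so it suffices to treat $i=0$. Then I would prove by induction on $n\ge 0$ that for every $(x_{-n},\dots,x_0)$,
$$
P(\tilde X_{-n}(0)=x_{-n},\dots,\tilde X_0(0)=x_0)=P(X_{-n}=x_{-n},\dots,X_0=x_0).
$$
The base case $n=0$ is immediate since $\zeta_0(0)=0$ and $\tilde X_0(0)=X_0$.

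For the inductive step, fix $(x_{-n+1},\dots,x_0)=w^0_{-n+1}$ and $x_{-n}=x$. Using the definition (\ref{defzeta})--(\ref{defprocesses}), the event $\{\tilde X_{-n}^0(0)=(x,w^0_{-n+1})\}$ decomposes as the disjoint union over $s\ge n-1$ and $t\ge 1$ of the events
$$
B_{s,t}=\{\zeta_{n-1}(0)=s,\ X_{s-n+1}^{s}=w^0_{-n+1},\ X_{s-n+1+\tau}^{s+\tau}\neq w^0_{-n+1}\text{ for }0<\tau<t,\ X_{s+t-n+1}^{s+t}=w^0_{-n+1},\ X_{s+t-n}=x\}.
$$
The event $\{\zeta_{n-1}(0)=s\}\cap\{X_{s-n+1}^s=w^0_{-n+1}\}$ is a Borel function of $X_0^s$, and the remaining constraints depend on $X_s^{s+t}$. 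Hence $B_{s,t}$ is a cylinder in $X_0^{s+t}$, and one can apply the shift $T^{-(s+t)}$ (exactly as in the proof of Lemma~\ref{iidlemma}, where the passage to negative indices was done by shifting) to rewrite $P(B_{s,t})$ as the probability of the analogous event translated to end at position $0$, namely
$$
C_{s,t}=\{X_{-n}=x,\ X_{-n+1}^0=w^0_{-n+1},\ X_{-n+1-\tau}^{-\tau}\neq w^0_{-n+1}\text{ for }0<\tau<t,\ X_{-t-n+1}^{-t}=w^0_{-n+1},\ \zeta_{n-1}(-s-t)=s\},
$$
where the last condition is the shifted version of $\zeta_{n-1}(0)=s$.

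Summing first over $s$ eliminates the constraint involving $\zeta_{n-1}$: for each fixed $t$, summing over $s$ recovers the full event $\{X_{-n}=x,\ X_{-n+1}^0=w^0_{-n+1},\ w^0_{-n+1}\text{ appears in }X_{-t-n+1}^{-t}\text{ and nowhere between}\}$, because almost surely $\zeta_{n-1}$ of the shifted process takes \emph{some} value (this uses ergodicity and the fact that $w^0_{-n+1}$, being of positive probability, recurs a.s.). Then summing over $t\ge 1$ gives a telescoping / disjoint union whose limit is exactly $\{X_{-n}=x,\ X_{-n+1}^0=w^0_{-n+1},\ w^0_{-n+1}\text{ appears somewhere in the past}\}$, which by Poincar\'e recurrence has probability $P(X_{-n}^0=(x,w^0_{-n+1}))$, completing the induction. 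Since joint distributions of all finite marginals agree, the two processes have the same distribution by Kolmogorov's extension theorem.

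The main obstacle will be verifying carefully that the events $B_{s,t}$ really partition $\{\tilde X_{-n}^0(0)=(x,w^0_{-n+1})\}$ and that the shift manipulation is legitimate — in particular, that the recursive stopping-time condition $\zeta_{n-1}(0)=s$ can be transported through the shift and then summed out using the a.s.\ existence of $\zeta_{n-1}$ for the shifted process. Once these bookkeeping issues are settled, the argument is purely a stationarity-plus-recurrence computation exactly in the spirit of the manipulations carried out in Lemma~\ref{iidlemma}.
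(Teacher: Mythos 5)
Your skeleton is the same as the paper's (decompose according to the values of the recurrence times, move each cylinder piece by an appropriate power of the shift, and finish with stationarity plus Poincar\'e recurrence), but the step you yourself flag as ``the main obstacle'' is exactly where a genuine gap remains, and the fact you invoke to close it is not the right one. After the $s$-dependent shifts, the events $C_{s,t}$ (for fixed $t$) are no longer automatically pairwise disjoint: $B_{s,t}$ and $B_{s',t}$ are disjoint because they prescribe different values of the \emph{same} stopping time $\zeta_{n-1}(0)$, whereas $C_{s,t}$ and $C_{s',t}$ refer to reconstructions started at \emph{different} indices $-s-t$ and $-s'-t$, so disjointness must be proved before you may write $\sum_s P(C_{s,t})=P\left(\bigcup_s C_{s,t}\right)$. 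Moreover, for $\bigcup_s C_{s,t}$ to exhaust $D_t$ you need that, almost surely on $D_t$, there \emph{exists} some $s$ with $\zeta_{n-1}(-s-t)=s$, i.e.\ some starting index whose $(n-1)$-step reconstruction anchors precisely at $-t$. The fact you cite --- that $\zeta_{n-1}$ of the shifted process a.s.\ takes some value --- only says that each fixed start anchors \emph{somewhere}; it gives neither existence of a start anchoring at the prescribed position nor uniqueness, and without both the identity $\sum_s P(C_{s,t})=P(D_t)$ is unjustified.

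Both facts are true, and the idea needed to prove them is precisely the device the paper introduces and your proposal lacks: the backward stopping times $\hat\zeta^k_j$ and the duality $\zeta_k(i)=l$ if and only if $\hat\zeta^k_k(T^{i+l}\cdot)=-l$. Concretely, since the $j$-th forward anchor is the \emph{first} index after the $(j-1)$-st at which the relevant word recurs, the \emph{previous} occurrence of the length-$j$ word ending at the $j$-th anchor is exactly the $(j-1)$-st anchor; iterating, the starting index is a deterministic backward function of the final anchor, which gives the uniqueness (hence disjointness in $s$), while Poincar\'e recurrence applied to the countably many words of positive probability gives a.s.\ existence. Once you have this, your induction is superfluous (note you never actually use the induction hypothesis): the paper proves equality of the $(k+1)$-dimensional marginals in one stroke via the set identity (\ref{shiftequation}), summing over $l$ and using exactly the a.s.\ finiteness of $\hat\zeta^k_k$. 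So the approach is salvageable, but supplying the missing existence-and-uniqueness argument essentially reproduces the paper's proof.
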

\proof
\noindent
For all $k\ge 1$ and
$1\le i\le k$
define
$\hat\zeta^k_0=0$ and
$$
\hat\zeta^k_i=\hat\zeta^k_{i-1}-
\min\{t>0 :
X_{\hat\zeta^k_{i-1}-(k-i)-t}^{\hat\zeta^k_{i-1}-t}
=
{X}_{\hat\zeta^k_{i-1}-(k-i)}^{\hat\zeta^k_{i-1}}\}.
$$
Let $T$ denote the left shift operator,
that is, $(T x^{\infty}_{-\infty})_i=x_{i+1}$. It is easy to see that if  and only if
$\zeta_k(i)(x_{-\infty}^{\infty})=l$ then
${\hat \zeta}^k_k(T^{(i+l)} x_{-\infty}^{\infty})=-l$.
\noindent
Now the statement follows from stationarity and the fact that for $k\ge 0$,
$x^{0}_{-k}\in {\cal X}^{k+1}$,  $l\ge 0$,
\begin{equation}\label{shiftequation}
T^{i+l} \{X^{i+\zeta_k(i)}_{i+\zeta_k(i)-k}=x^0_{-k},\zeta_k(i)=l\} =
\{ X^{0}_{-k}=x^{0}_{-k},{\hat \zeta}^k_k(X^0_{-\infty})=-l\}.
\end{equation}
The proof of Lemma~\ref{eqdistrlemma} is complete.

\begin{lemma} \label{appearlemma}
If $P(X_0^n=w_0^n)>0$  for the string $w^n_0$   then 
almost surely,
\begin{equation}
{\tilde X}^0_{-n}(i)=w^n_0 \ \mbox{for some i}.
\end{equation}
\end{lemma}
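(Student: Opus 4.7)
The plan is to reduce the statement to an application of the ergodic theorem, by recognizing that $\tilde X^{0}_{-n}(i)$ is a time shift of $\tilde X^{0}_{-n}(0)$ as a functional of the one-sided process $X_0^{\infty}$.

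First I would verify, by induction on $k$, that $\zeta_k(i)\ge k$ for every $k\ge 0$: the base case $\zeta_0(i)=0$ is immediate, and each increment on the right hand side of the recursion (\ref{defzeta}) is at least $1$. Consequently $i+\zeta_k(i)-k\ge i$ for all $0\le k\le n$, so by (\ref{defprocesses}) the random variables $\tilde X_{-k}(i)$, $0\le k\le n$, are all measurable with respect to $\sigma(X_i,X_{i+1},\dots)$. Moreover a single measurable function $F:{\cal X}^{\mathbb N}\to{\cal X}^{n+1}$ produces all of them simultaneously: unwinding (\ref{defzeta}) and (\ref{defprocesses}) shows $\tilde X^{0}_{-n}(i)=F(T^iX)$, where $T$ is the left shift, and in particular $\tilde X^{0}_{-n}(0)=F(X)$.

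Second I would put $A=\{x:F(x)=w_0^n\}$. By Lemma~\ref{eqdistrlemma} the joint distribution of $\tilde X^{0}_{-n}(0)$ equals that of $X^{0}_{-n}$, hence
$$
P(A)=P(\tilde X^0_{-n}(0)=w_0^n)=P(X^{0}_{-n}=w_0^n)>0
$$
by hypothesis (after relabeling, $P(X_0^n=w_0^n)=P(X^0_{-n}=w_0^n)$ by stationarity). Since $\{X_n\}$ is stationary and ergodic, the ergodic theorem applied to $\mathbf{1}_A$ gives
$$
\frac1N\sum_{i=0}^{N-1}\mathbf{1}_A(T^iX)\longrightarrow P(A)>0\qquad\text{almost surely,}
$$
so almost surely there exist (in fact infinitely many) indices $i\ge 0$ with $T^iX\in A$, i.e. $\tilde X^{0}_{-n}(i)=w_0^n$, which is the desired conclusion.

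The main thing one must be careful about is that $F$ be well defined almost surely: all the stopping times $\zeta_k(i)$ must be finite. This is however a direct consequence of Poincar\'e recurrence applied successively to the finite string $X_{i+\zeta_{k-1}(i)-(k-1)}^{i+\zeta_{k-1}(i)}$, whose distribution has positive mass at the actually observed value by stationarity; off the corresponding full-measure set one may set $F$ to an arbitrary constant without affecting the argument. I do not expect any further obstacle beyond this routine measurability check.
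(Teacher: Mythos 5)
Your argument is correct, but it is not the route the paper takes. You prove shift-equivariance of the reconstruction, namely $\tilde X^0_{-n}(i)=F(T^iX)$ with one fixed measurable $F$ of the forward coordinates (your induction $\zeta_k(i)\ge k$ is exactly what makes all accessed indices lie in $\{i,i+1,\dots\}$, and the a.s.\ finiteness of the $\zeta_k$ via Poincar\'e recurrence is the right caveat), then you transfer positivity of probability through Lemma~\ref{eqdistrlemma} and finish with the Birkhoff ergodic theorem, getting in fact infinitely many indices $i$, with density $P(X_0^n=w_0^n)$. There is no circularity, since Lemma~\ref{eqdistrlemma} is proved independently and earlier. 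The paper's own proof is instead a short combinatorial argument that does not use Lemma~\ref{eqdistrlemma} at all: let $t$ be the $(n+1)$-th occurrence of $w_0^n$ in $X_0^{\infty}$ (which exists a.s.\ because the string has positive probability and the process is ergodic), and observe by a backward-tracking/pigeonhole argument that some starting index $0\le i\le t$ must have its chain of recurrences match the suffixes ending at $t$, i.e. $X^{i+\zeta_k(i)}_{i+\zeta_k(i)-k}=X^t_{t-k}$ for $k=0,\dots,n$, whence $\tilde X^0_{-n}(i)=w_0^n$ deterministically on that event. Your soft approach buys a stronger conclusion (infinitely many witnesses, with positive density) at the cost of invoking the distributional identity and the ergodic theorem; the paper's approach is more elementary and explicit, giving a concrete bound $i\le t$ in terms of the $(n+1)$-th occurrence time, which is closer in spirit to the quantitative flavor of the rest of the section.
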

\proof
Let $t$ denote the $n+1$-th occurrence of the string $w_0^n$ in $X^{\infty}_{0}$.
It is easy to see that there must be a $0\le i\le t$ such that
$$
X^{i+\zeta_k(i)}_{i+\zeta_k(i)-k}=X^t_{t-k} \ \mbox{for $k=0,1,\dots, n$}
$$
and so
$$
{\tilde X}^0_{-n}(i)=w_0^n.
$$
The proof of Lemma~\ref{appearlemma} is complete.

\bigskip
\noindent
{\bf Proof of Theorem~\ref{postheorem}}
\bigskip
\noindent
There is a $N$ large enough such that
$$
P(K(X^0_{-\infty})<N)\ge 1-\epsilon/4.
$$
Then by Lemma~\ref{appearlemma} and ergodicity, $\theta_n$ is a bounded sequence
(cf.  the proof of Theorem~\ref{postheoremp}). 
By Lemma~\ref{eqdistrlemma} and
Theorem~\ref{thmkbackcons}
$$
\rho_n^i=K({\tilde X}^0_{-\infty}(i)) \ \mbox{for all $i=1,\dots,\theta_n$}
$$
eventually almost surely.
We have proved (\ref{conspartthm}).
We have to prove (\ref{growthpartthm}).
Let $J$ denote the  smallest $j$  such that
$$
\sum_{i=0}^j p({\tilde X}^0_{-K({\tilde X}^0_{-\infty}(i))}(i))\ge
1-{\epsilon\over 2}.
$$
It is obvious from the definition above that
$$
\sum_{i=0}^{J-1} p({\tilde X}^0_{-K({\tilde X}^0_{-\infty}(i))}(i))<
1-{\epsilon\over 2}.
$$
Thus
$\theta_n\ge J$ eventually almost surely.
Thus
\begin{eqnarray*}
\lefteqn{
\liminf_{n\to\infty} { \left|{\cal N}\bigcap \{0,1,\dots,n\}\right| \over n+1} }\\
&\ge&
\liminf_{n\to\infty} {\left|\bigcup_{i=0}^J A^i_n\right| \over n+1}\ge 1-{\epsilon\over 2}\ \ \mbox{almost surely.}
\end{eqnarray*}
The proof of Theorem~\ref{postheorem} is complete.

\section{Memory Estimation for  Markov Processes}

   In this section we shall examine how well can one estimate
   the local memory length for finite order Markov chains. In the
   case of finite alphabets this can be done with stopping times
   that eventually cover all time epochs.
(Indeed,
assume $\{X_n\}$ is a Markov chain taking values from a finite set.
Assume $ORDEST_n$ estimates the order in a pointwise sense from 
data $X^n_0$ e.g. as in Csisz\'ar and  Shields
\cite{CsSh00} or in Morvai and Weiss \cite{MW05}. Then let
$$
\rho_n=\min\{0\le t\le ORDEST_n: \ PTEST_n(X^n_{n-t+1})=YES\}
$$
if there is such $t$ and $0$ otherwise.
Since $ORDEST_n$ eventually gives the right order and there are finitelly many possible strings with length not greater
than the order thus $\rho_n=K(X^n_{-\infty})$ eventually almost surely by Theorem~\ref{ptestthm}.)

However, as soon
   as one goes to a countable alphabet, even if the order is known
   to be two and we are just trying to decide whether the $X_n$ alone
   is a memory word or not, there is no sequence of stopping
   times which is guaranteed to succeed eventually and whose density
   is one. This shows that the $\epsilon$ in the preceding sections
   cannot be eliminated.

\begin{theorem} \label{couplingtheorem}
 There are no strictly increasing sequence of stopping times $\{\lambda_n\}$
and estimators $\{h_n(X_{0},\dots,X_{\lambda_n})\}$
taking the values one and two,
such that for all countable alphabet  Markov chains of order two:
 $$\lim_{n\to\infty} {\lambda_n\over n}=1$$
 and

$$
   \lim_{n\rightarrow \infty} |h_n(X_0,\dots,X_{\lambda_n}) - K(X^{\lambda_n}_{0}) | = 0
\   \  \mbox{with probability one.}
 $$
\end{theorem}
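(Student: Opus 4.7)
The approach is by contradiction via an adversarial construction. Assume such a pair $\{\lambda_n\}$ and $\{h_n\}$ exists. I will build a countable-alphabet stationary ergodic order-$2$ Markov chain on which the estimator fails with positive probability at positive density of times, contradicting the hypothesis. The construction exploits the lack of any universal rate of convergence for empirical distributions of countable-alphabet Markov chains: rare pairs $(X_{n-1},X_n)$ cannot be classified reliably from finite samples, in sharp contrast to the finite-alphabet case discussed in the paragraph preceding the theorem.

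I would work within a parametrized family of order-$2$ Markov chains on $\mathbb{N}$, indexed by bit sequences $\theta = (\theta_k)_{k\ge 1} \in \{0,1\}^{\mathbb{N}}$. Common base dynamics keep the chain near a dominant state most of the time and produce sparse visits to rare states $r_1,r_2,\ldots$ with prescribed stationary probabilities $\pi_k > 0$ summing to some fixed $\delta > 0$. Order-$2$ structure enters only through the transitions out of rare states: the bit $\theta_k$ toggles whether $P(X_{n+1}=\cdot \mid X_n = r_k, X_{n-1})$ genuinely depends on $X_{n-1}$. For every $\theta$ the result is a stationary ergodic chain of order $2$, with $K(X^n_{-\infty}) = 1+\theta_k$ at positions where $X_n = r_k$ and $K(X^n_{-\infty})$ determined by the base dynamics elsewhere.

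I then choose $\theta$ against the fixed estimator $h_n$ by stagewise diagonalization. Pick a sparse subsequence of indices $k_1 < k_2 < \cdots$ along which the $\pi_{k_j}$ still aggregate to at least $\delta$, but so sparse that typical first-occurrence times of $r_{k_j}$ are far apart. At stage $j$ the bits $\theta_{k_1},\ldots,\theta_{k_{j-1}}$ are already fixed, which together with the base chain determines the joint distribution of the sample and hence of $h_n$'s output at the first (and, by stationarity, every later) occurrence of $r_{k_j}$. That output is a $\{1,2\}$-valued random variable; set $\theta_{k_j}$ to whichever value maximizes the conditional failure probability at $r_{k_j}$-positions. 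Since $\theta_{k_j}$ is binary, at least half of the conditional mass falls on the wrong answer, so $h_n$ is incorrect at each $r_{k_j}$-position with probability bounded below by a fixed constant.

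Combining, the positions with $X_n \in \{r_{k_j}\}_{j\ge 1}$ have density at least $\delta$ by the ergodic theorem, and the requirement $\lambda_n/n \to 1$ forces density at least $\delta - o(1)$ of the $\lambda_n$ to land on such positions, each of which is misclassified by $h_n$ with positive probability, contradicting almost sure convergence. The main obstacle I expect is reconciling the adversarial specification of $\theta$ with the requirement that the resulting process be a bona fide stationary ergodic Markov chain of order $2$: the bits must be fixed before the realization is drawn, yet chosen in light of $h_n$'s response on that realization. The resolution is to separate the chain construction from the realization by computing the distribution of $h_n$'s output symbolically as a function of $\theta$, and then to pick each $\theta_{k_j}$ to maximize the conditional failure probability while arranging the $r_{k_j}$ sparsely enough that the choices across stages are effectively independent. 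Making this measurable selection compatible with the stationarity and ergodicity of the order-$2$ chain is the delicate point of the construction.
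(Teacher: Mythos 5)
There is a genuine gap at the heart of your construction: the claim that, because $\theta_{k_j}$ is binary, the adversarial choice forces the estimator to be wrong at $r_{k_j}$-positions with conditional probability at least a fixed constant. That would be valid only if the distribution of $h_n$'s output at those positions were the same under $\theta_{k_j}=0$ and $\theta_{k_j}=1$; but flipping the bit changes the law of the process, hence of the data $X_0^{\lambda_n}$, hence of the output. Your two candidate failure probabilities are $P_0(h=2)$ and $P_1(h=1)$ computed under \emph{different} measures, and nothing forces their maximum to be bounded below. Indeed an estimator can defeat your construction outright: the set of times $m$ at which the current symbol $X_m$ has been visited fewer than $T$ times in $X_0^m$ has density zero, so a scheme may skip all such times (still keeping $\lambda_n/n\to 1$) and, at the remaining times, read off from the $\ge T$ observed exits from $X_m$ whether they depend on the predecessor — i.e., it learns your bit before it ever answers at an $r_{k_j}$-position, and both failure probabilities are small. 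Sparsity of the $r_{k_j}$ does not help, because the estimator is never obliged to answer at first occurrences. Relatedly, your target of a \emph{positive density} of failures is stronger than needed and likely unattainable (the paper's remark after Theorem 6 notes that density-zero failure sets are achievable by backward schemes); infinitely many failures suffice to contradict almost sure convergence.

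The paper's proof supplies exactly the mechanism your sketch is missing, and it is worth seeing why it evades the objection above. One first uses the \emph{assumed consistency on the current, unperturbed chain} (where the singleton $k$ is still a memory word with positive stationary probability) together with the density-one property of $\{\lambda_n\}$ to force the estimator, with probability at least $1-10^{-k-1}$, to output the answer $1$ at some $\lambda_n<N_k$ with $X_{\lambda_n}=k$ — a commitment pinned to a \emph{fixed finite horizon} $N_k$. Only then is the chain perturbed, and only through transitions conditioned on an astronomically rare predecessor $t_k$: a coupling lemma shows that if $t_k$ is large enough the variational distance between the length-$N_k$ distributions of the old and new processes is below $\delta$, so the committed answer still occurs with probability close to one, but in the new process $k$ is no longer a memory word and that answer is an error. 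No later learning by the estimator can undo an error already committed at a time $<N_k$. Iterating over $k$ and applying Borel--Cantelli to the complements yields infinitely many errors almost surely in the limiting order-two chain, contradicting the hypothesis. To repair your argument you would need to replace the "half the mass is wrong" step with this kind of finite-horizon commitment plus a quantitative indistinguishability estimate between the two parameter values on that horizon.
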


    To prove the theorem we will assume that such a pair of stopping times
    and estimators exist and construct a Markov chain $\{X_n\}$ of order two for
    which  they fail.
   The  Markov chain of order two that we construct will have for its
   state space the nonnegative integers ${\mathbf N}$ , and it
    will be a perturbation of the
   1-step Markov chain $Z_n$ defined by the following formulae:

$$ P_{s,s+r} = 2^{-r-1}\ \ \mbox{for all} \ r \geq 1,$$
$$P_{s,s} = 2^{-s-1}$$and
$$ P_{s,j} = 2^{-j-2}\ \ \mbox{for all} \ 0\leq j < s.$$

  Notice that from any state $s>0$ there is a fixed probability of $1 
\over 4$ of
  going to $0$. Also there is a strictly positive probability of going from
  any state to any other. These properties ensure that there is a finite
  stationary measure. The ultimate chain will preserve most of these conditional
  probabilities, with the difference depending on a sequence of integers $t_k  >> k$
  which will be defined later. The perturbed chain $\{X_n\}$ will have
   the  same transition probabilities as the original chain $\{Z_n\}$ 
for $X_n$ given  $(X_{n-2},X_{n-1})$ when the latter, 
$(X_{n-2},X_{n-1})$, equals  
   any pair $(t,s)$ with the exception of  $(t_k,k)$ 
for $k\geq0$.
   In that
  case we will modify the probability of the transitions to $k$ and $k+1$ by
  interchanging the values of $P_{k,k}$ and $P_{k,k+1}$.
  As soon as the first change is made $0$ ceases to be a memory word and therefore the order of the new chain is two. Eventually all  
  singletons  cease to be memory words. 

    The $t_k$'s will be chosen inductively in a fashion depending on the
    purported sequence of stopping times and estimators that we are trying
    to show cannot exist. At the k-th stage we will have only made these
    changes up to   k.
    Let us denote the Markov process of order $2$  that this defines
    by $\{Y^{(k)}_n\}$. More explicitly the process $\{Y^{(k)}_n\}$ is 
defined as follows. It has transition probabilities given by:
 for all $j\le k$
$$
P(Y^{(k)}_{n+2}=j|Y^{(k)}_n=t_j,Y^{(k)}_{n+1}=j)=P(Z_{n+2}=j+1|Z_{n+1}=j),
$$ 
$$
P(Y^{(k)}_{n+2}=j+1|Y^{(k)}_n=t_j,Y^{(k)}_{n+1}=j)=P(Z_{n+2}=j|Z_{n+1}=j),
$$
and for all other values of $(u,t,s)$ we have 
$$
P(Y^{(k)}_{n+2}=u|Y^{(k)}_n=t,Y^{(k)}_{n+1}=s)=P(Z_{n+2}=u|Z_{n+1}=s).
$$    Thus for this process, all singletons $j$ for $j>k$ are
    still memory words of length one, but none of the $j$ with $0 \leq j \leq k$
    are.
    The main technical lemma that we will need is that the distribution
    of finite blocks up to some preassigned length $N$ of the  
$\{Y^{(k)}_n\}$ and  $\{Y^{(k+1)}_n\}$
    processes are arbitrarily close if the $t_{k +1}$ is chosen sufficiently
    large. This is independent of the other properties of $t_k$ that are needed,
    and so we begin by establishing this fact.

    Our proof will be via a coupling
    argument. We will calculate the finite distributions of these two processes
    by calculating time averages of a pair of typical sequences generated by
    transition matrices starting from the pair $00$. The coupling is especially
    easy since for both processes , from any state there is a fixed probability
    of moving to $00$ of at least $1 \over 8$ and therefore no matter how the
    two sequences diverge if we continue their evolution independently
    there is at every moment a fixed probability, namely
    $1 \over 64$, of the processes returning simultaneously to $00$.

\begin{lemma} With the definitions above for $\{Y^{(k)}_n\}$ and 
$\{Y^{(k+1)}_n\}$ if $N$ and
   $\delta > 0 $ are arbitrary, for any choice of $t_{k+1}$ that is sufficiently
   large we will have  that the variational distance between the
   distributions of 
$(Y^{(k)}_{0},\dots,Y^{(k)}_{N})$ and 
$(Y^{(k+1)}_{0},\dots,Y^{(k+1)}_{N})$ 
is at most $\delta$.
\end{lemma}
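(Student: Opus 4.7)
The plan is to couple the two order-two Markov chains $\{Y^{(k)}_n\}$ and $\{Y^{(k+1)}_n\}$, both started from $(Y_0,Y_1)=(0,0)$, on a common probability space. By the very construction of $Y^{(k+1)}$, the two transition kernels agree on every past pair $(t,s)$ except $(t_{k+1},k+1)$, where the definition of $Y^{(k+1)}$ swaps the probabilities of transitioning to $k+1$ and $k+2$. Provided $t_{k+1}>k+1$ (so that the modified pair is genuinely new), I would drive both chains by the same random draw at each step, using the common transition whenever the current past pair is not $(t_{k+1},k+1)$. By induction on $n$, this forces $(Y^{(k)}_0,\dots,Y^{(k)}_N)=(Y^{(k+1)}_0,\dots,Y^{(k+1)}_N)$ on the event that the state $t_{k+1}$ does not appear in the first $N$ positions of $Y^{(k)}$. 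The coupling inequality then reduces the lemma to showing that $P(\max_{0\le n\le N} Y^{(k)}_n \ge t_{k+1})\to 0$ as $t_{k+1}\to\infty$.

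To bound this hitting probability I would compare $Y^{(k)}$ with the unperturbed one-step chain $\{Z_n\}$. The two chains differ only at the finitely many past pairs $(t_j,j)$ with $j\le k$, and each such modification merely exchanges the probabilities of moving to the small states $j$ and $j+1$. Consequently, regardless of the past pair, the positive one-step increment $\xi^+_n=(Y^{(k)}_{n+1}-Y^{(k)}_n)^+$ satisfies a uniform geometric tail bound of the form $P(\xi^+_n\ge r)\le 2^{-r+1}$ for all $r\ge 1$, with the bound independent of $t_{k+1}$ and of the past. This is where the explicit exponentially decaying form of the original transitions of $Z$ is used.

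An exponential moment argument then finishes the estimate. For $\lambda<\log 2$ one has $E[e^{\lambda\xi^+}]\le C_\lambda<\infty$, and since $\max_{0\le n\le N} Y^{(k)}_n$ is stochastically dominated by $\sum_{n=0}^{N-1}\xi^+_n$, a Chernoff bound yields $P(\max_{0\le n\le N} Y^{(k)}_n\ge t_{k+1})\le C_\lambda^{\,N}\,e^{-\lambda t_{k+1}}$, which can be made smaller than any prescribed $\delta>0$ by choosing $t_{k+1}$ large. Combined with the coupling reduction, this produces the required variational bound.

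The main technical point, though a modest one, is verifying the uniform exponential tail for $\xi^+$ after all the perturbations used to build $Y^{(k)}$ from $Z$; one must check that swapping probabilities between transitions to two small states cannot enlarge the positive-increment tails that $Z$ already enjoys. A direct calculation takes care of this. I note that the authors' remark about the fixed probability $1/64$ of simultaneous return to $00$ suggests an alternative coupling via regenerations at the pair $(0,0)$; I would use it only if the direct step-by-step coupling above needs sharpening, since for the present finite-horizon variational bound the hitting-time estimate for $t_{k+1}$ is already sufficient.
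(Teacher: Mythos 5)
There is a genuine gap, and it concerns what distribution the lemma is actually about. The processes $\{Y^{(k)}_n\}$ and $\{Y^{(k+1)}_n\}$ are the \emph{stationary} order-two chains (this is essential for the way the lemma is used in the proof of Theorem~\ref{couplingtheorem}, where stationary probabilities of blocks and of states appear throughout), so the variational distance to be bounded is between the stationary laws of the $(N+1)$-blocks. Your coupling starts both chains from the fixed pair $(0,0)$ and bounds the probability that the state $t_{k+1}$ is visited within the first $N$ steps; this controls the distance between the two laws of $(Y_0,\dots,Y_N)$ \emph{conditioned on the initial pair $(0,0)$}, which is a different statement. It does not control the difference of the stationary block distributions: under stationarity the initial pair is not $(0,0)$ (and its law already differs between the two chains), and a stationary trajectory visits $t_{k+1}$ with positive frequency $\pi(t_{k+1})$ no matter how large $t_{k+1}$ is, so "the perturbed pair is unlikely to be seen in a window of length $N$ from a fixed start" is not by itself a statement about the invariant measures. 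What makes the lemma true is that the stationary probability of the perturbed pair is small when $t_{k+1}$ is large, so the perturbation only matters on a low-density set of times along a typical trajectory; some argument must convert that into closeness of the stationary $N$-block laws.

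This conversion is exactly the part of the paper's proof that you set aside as optional. The paper runs an \emph{infinite} coupling: the two trajectories evolve identically until the $u$-sequence hits $t_{k+1}$, then evolve independently until they simultaneously return to the pair $(0,0)$ (which happens in a time that is geometric with parameter $1/64$, thanks to the uniform probability of moving to $0$), and then recouple; the ergodic theorem identifies the stationary $N$-block probabilities with empirical $N$-block frequencies along these typical sequences, and the fraction of mismatched windows is at most $(64+2N)\gamma$ where $\gamma$ bounds $\pi^{(k)}(t_{k+1})$. Your finite-horizon hitting estimate (which is fine as far as it goes, including the uniform geometric tail for the positive increments after the swaps) cannot replace this; to repair the argument you would need either the paper's regeneration coupling or an equivalent cycle/renewal representation of the stationary measure over excursions from $(0,0)$, i.e., precisely the mechanism you deferred.
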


\begin{proof}
    Since the $\{Y^{(k)}_n\}$ is fixed at the start, given $N$ and
    $\delta$ we can choose $T$ sufficiently large so that the stationary
    probability $\pi^{(k)}(t) < \gamma$ for any $t>T$, where
    $\gamma = \delta/(64+2N)$.

   Suppose that we choose $t_{k+1} > T$. We begin the coupling by starting
   each of the processes at the pair $00$. Denote by $u_j$ and $v_j$ the
   random sequences constructed by applying the transition functions for the two
   processes $\{Y^{(k)}_n\} , \{Y^{(k+1)}_n\}$ respectively.
    Until $u_j = t_{k+1}$ for the first time the sequences
   can be taken to be identical,
   since they have the same transition probabilities for pairs
   that do not include this state. We denote by $\sigma_1$ this moment, and
   continue the coupling now independently waiting for the first moment
    $j > \sigma_1$ that  the equality $(u_j,u_{j+1}) = (v_j,v_{j+1}) = (0,0)$
   holds. Call this moment $\tau_1$.
   Notice that $\sigma_1$ is a function of the $u_j's$ while $\tau_1$
   is a function is a function of both processes.
   Beginning with $\tau_1$ we can once again continue
   the evolution in an identical fashion until the first moment $j > \tau_1$ that
     $u_j = t_{k+1}$. Call that stopping time $\sigma_2$. Note that this stopping time
     also depends on both processes.
   As before, as soon as this happens
     continue the processes independently until the first moment $j > \sigma_2$
     that the equality $(u_j,u_{j+1}) = (v_j,v_{j+1}) = (0,0)$ holds.
      It should now be clear how this is continued
     to build (with probability one) typical sequences for the two processes.
     In order to compare the stationary
     distributions of words up to length $N$ in the two processes we need to
     know what is the relative frequency of the periods when we are coupling independently
     compared to the periods when we are producing the same symbols.

    The asymptotic frequency of the occurrence of $t_{k+1}$ in the $u_j$ sequence
    is known to be at most $\gamma$ and at the stopping times $\sigma_i$
    $u_{\sigma_i} = t_{k+1}$.
     The gaps $\tau_i - \sigma_i$ are independent
    for different $i$'s and have a length which has a geometric distribution
    with fixed parameter $1 \over 64$ as we remarked earlier. Thus the average
    fraction of the time that the $N$-strings in the $u$ and $v$ sequences
     do not match exactly is at most $(64 + 2N) \gamma$. It follows
     that the variational distance between 
$(Y^{(k)}_{0},\dots,Y^{(k)}_{N} )$ and $(Y^{(k+1)}_{0},\dots,Y^{(k+1)}_{N} 
)$
  is at most $(64 + 2N) \gamma = \delta$
    and thus the the lemma has been established.

\end{proof}

  We can now give the

\bigskip
\noindent
{\bf Proof of Theorem~\ref{couplingtheorem}:}
\bigskip
\noindent

   Suppose that there does exist a sequence of stopping times and estimators
   as in the statement of the theorem. We begin with the one step Markov
   chain $Z_n$ described above and observe that the state $0$ has a positive
   stationary probability. Since the $\lambda_n$'s have  density one we can
   find an $N_0$ so that with probability at least $1 - {1 \over 10}$ in the
   string $Z^{N_0}_0$ there will be some $\lambda_n < N_0$ with $Z_{\lambda_n} = 0$
   and

   $$h_n(Z_{0},\dots,Z_{\lambda_n}) = 1 $$.

   We can  apply the lemma with $N=N_0$ and $\delta = {1 \over 10}$ to find
   a suitable $t_0$ with which we can define a $\{Y^{(0)}_n\}$ process in 
which now
   $0$ is not a memory word , so that for those strings where $Z_{\lambda_n} = 0$
    and $h_n(Z_{0},\dots,Z_{\lambda_n}) = 1 $ a definite mistake is being made.
    Such strings with length $N_0$ still have probability at least 
	$1-{2 \over 10}$.
   Having defined  $\{Y^{(0)}_n\}$ we notice now that the state $1$ is 
still a memory word
   of length one with positive stationary probability, and therefore we can find an
   $N_1$ sufficiently large so that with probability at least $1 - {1 \over 10^2}$ in the
   string $(Y^{(0)}_0,\dots,Y^{(0)}_{N_1})$ there will be some $\lambda_n 
< N_1$ with
   $Y^{(0)}_{\lambda_n} = 1$ and

   $$h_n(Y^{(0)}_{0},\dots,Y^{(0)}_{\lambda_n}) = 1 $$.

   As before we apply the lemma with $N=N_1$ and $\delta = 10^{-2}$ to find a
   suitable $t_1$ with which we can define the next process 
$\{Y^{(1)}_n\}$. For 
this process
   although $1$ fails to be a memory word we still can
   estimate  the probability that in a  
	$(Y^{(1)}_0,\dots,Y^{(1)}_{N_1})$ string there will be
   some $\lambda_n < N_1$ with $Y^{(1)}_{\lambda_n} = 1$
   and

   $$h_n(Y^{(1)}_{0},\dots,Y^{(1)}_{\lambda_n}) = 1 $$

   as being at least $1 - 2\times10^{-2}$. In addition, the previous estimate on
   strings of length $N_0$ is degraded only by $10^{-2}$, since the estimate on the
   variational distance descends to strings of shorter length.
   By now it should be clear how to continue the inductive construction of the $t_k$'s.
   The ultimate process that we obtain , which we may denote simply by 
	$\{X_n\}$ is
   of course a Markov chain of order two, and it has no memory words of length
   one at all. However, for every $k$, the probability that
   there will be
   some $\lambda_n < N_k$ with $X_{\lambda_n} = k$
   and

   $$h_n(X_{0},\dots,X_{\lambda_n}) = 1 $$

   will be at least $1 - {2 \over {9\times 10^k}}$. The Borel-Cantelli
   lemma implies that with probability
   one there will be infinitely many mistakes being made by our estimator
   contrary to the assumption. This concludes the proof of Theorem~\ref{couplingtheorem}.

\section{Limitations for Binary Finitarily Markovian Processes}

\bigskip
\noindent

       In the preceding section we showed that we cannot achieve
       density one in the forward memory length estimation problem
       even in the class of Markov chains on a countable alphabet.
       In this section we shall show something similar in the
       class of binary (i.e. ${0,1}$) valued finitarily Markov
       processes.
       To prove  this  we will assume
       that there is given a sequence of estimators and
       stopping times, $(h_n, \lambda_n)$
       that do succeed to estimate successfully the memory length
       for binary Markov chains of finite order and construct a
       finitarily Markovian binary process on which the scheme
       fails infinitely often. This differs from the proof outline
       of the previous section. There
       a contradiction was reached showing that the purported estimators
       do not exist.
       In the present case,
         as we remarked  in the
       opening paragraph of $\S 5$  there does exist a sequence of estimators
       $h_n$ which eventually succeed in giving the memory length
       almost surely for all binary Markov chains of finite order.
       Here is a precise statement:
\begin{theorem} \label{Thlimits}
 For any strictly increasing sequence of stopping times $\{\lambda_n\}$
and sequence of estimators
$\{h_n(X_{0},\dots,X_{\lambda_n})\}$,
such that for all stationary and ergodic binary
Markov chains with arbitrary finite order, $\lim_{n\to\infty} {\lambda_n\over n}=1$,
 and
$$ \lim_{n \rightarrow \infty} |h_n(X_{0},\dots,X_{\lambda_n}) - K(X_0^{\lambda_n})| =0
\ \ \mbox{almost surely}
$$
 there is a stationary, ergodic  finitarily Markovian binary time series
 such that on a set of positive measure of process realizations
$$
 h_n(X_0,\dots,X_{\lambda_n})\neq K(X^{\lambda_n}_{-\infty})
$$
infinitely often.
\end{theorem}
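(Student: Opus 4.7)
Following the template of Theorem~\ref{couplingtheorem} but adapted to the binary alphabet, I would inductively construct a sequence of stationary ergodic binary Markov chains $Y^{(0)}, Y^{(1)}, Y^{(2)},\dots$ of strictly increasing finite orders $L_0 < L_1 < \cdots$, each a small perturbation of the previous in variational distance on finite blocks, and then pass to a limit $X$. The perturbation at stage $k$ is arranged so that a distinguished memory word $w_k$ of $Y^{(k)}$, on which the hypothesised estimator $h_n$ has eventually correctly declared memory length $|w_k|$ by its consistency assumption on finite-order binary Markov chains, ceases to be a memory word of $Y^{(k+1)}$ and so produces a definite error. Because in the binary setting we cannot make a single alphabet symbol rare as in \S 5, the perturbations are localised to histories matching specific rare long binary patterns.

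Concretely, given $Y^{(k)}$ of order $L_k$ with memory word $w_k$ of length $k_k \le L_k$ and positive stationary probability, the hypothesis on $(h_n,\lambda_n)$ applied to $Y^{(k)}$ yields $N_k$ (chosen with $N_k \gg N_{k-1}$) such that with probability at least $1-2^{-k-3}$ some $\lambda_n \in (N_{k-1}, N_k]$ has $Y^{(k)}_{\lambda_n-k_k+1}^{\lambda_n}=w_k$ and $h_n(Y^{(k)}_0,\dots,Y^{(k)}_{\lambda_n})=k_k$. I would then choose a binary pattern $v_{k+1}$ of length $L_{k+1}\gg L_k$ ending in $w_k$ whose stationary probability under $Y^{(k)}$ is tiny, and define $Y^{(k+1)}$ to be the order-$L_{k+1}$ binary Markov chain agreeing with $Y^{(k)}$ on every transition except from the past-$L_{k+1}$-string $v_{k+1}$, where the two output probabilities are swapped; this kills $w_k$ as a memory word of $Y^{(k+1)}$. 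A coupling argument in the spirit of the lemma of \S 5 --- two chains driven from a common past agree until the $Y^{(k)}$-chain first visits the history $v_{k+1}$, after which they evolve independently and resynchronise on any common $L_{k+1}$-window at a geometric rate afforded by the strong mixing of the base binary chain --- bounds the variational distance of length-$N_k$ blocks of $Y^{(k)}$ and $Y^{(k+1)}$ by $2^{-k-3}$ once $L_{k+1}$ is chosen large enough. Consequently $h_n$ errs on $Y^{(k+1)}$ at some $\lambda_n \in (N_{k-1}, N_k]$ with probability at least $1-2^{-k-2}$.

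The sequence $\{Y^{(k)}\}$ is Cauchy in variational distance on each fixed finite block and defines a stationary limit process $X$. The main obstacle is to arrange that $X$ is finitarily Markovian while simultaneously ensuring that the destruction of $w_k$ as a memory word persists from $Y^{(k+1)}$ all the way to $X$ rather than being undone by later perturbations. I would handle this by choosing the $\{v_{k+1}\}$ so that, for $X$-almost every past $X^0_{-\infty}$, only finitely many of them occur as a suffix of the past --- for instance by making them pairwise incompatible as suffixes, or by taking them so rare that $\sum_k\pi_X(v_{k+1})<\infty$ and invoking Borel--Cantelli on the stationary past. The conditional distribution $P(X_1\mid X^0_{-\infty})$ is then determined by the last $L_j$ symbols for the largest matching perturbation index $j$ (or the last $L_0$ symbols of the base chain if none match), so the memory is a.s.~finite, and ergodicity of $X$ is inherited from the recurrence of the base binary chain. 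The conclusion is then a Borel--Cantelli argument as in \S 5: the variational distance between $X$ and $Y^{(k+1)}$ on blocks of length $N_k$ is at most $\sum_{j>k}2^{-j-3}$, so the event that $h_n$ errs at some $\lambda_n \in (N_{k-1}, N_k]$ of $X$ has probability at least $1-c2^{-k}$; summability of the complementary probabilities and disjointness of the intervals $(N_{k-1},N_k]$ force infinitely many distinct errors with $X$-probability one.
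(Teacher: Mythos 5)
Your plan reproduces the mechanism of Theorem~\ref{couplingtheorem} (perturb on rare contexts, couple, pass to a limit), but in the binary setting the perturbation depths $L_k$ must grow without bound, and this is where the argument has a genuine hole. Your counterexample exists only as a weak limit of the chains $Y^{(k)}$: convergence of finite-dimensional marginals gives you a stationary limit, but it gives you neither ergodicity nor the identification of the infinite-past conditional kernel of $X$ as the ``largest matching $v_j$'' rule, and both are asserted rather than proved. In \S~5 this issue never arises because the limit is an explicit order-two chain with the modified transition matrix; once the context lengths are unbounded you would need a separate existence/uniqueness/ergodicity argument for a stationary process with the prescribed infinite family of contexts, and without identifying the limit kernel you cannot even assert that $w_k$ fails to be a memory word of $X$, which is the whole point of the perturbation. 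The paper's proof of this theorem avoids weak limits entirely: it fixes one hidden Ryabko chain $\{M_n\}$ and realizes every approximating process and the final process as instantaneous functions $X^{(j)}_n=f^{(j)}(M_n)$, $X_n=f(M_n)$ with $f^{(j)}=f$ on states $\le s_{j+1}$. Since $M_n$ climbs by at most one state per step, on the conditioning event $M_0M_1M_2=012$ the first $s_{j+1}$ observed symbols of the final process coincide \emph{exactly} with those of the finite-order chain $X^{(j)}$, so the high-probability error events transfer with no coupling or variational-distance step, while stationarity, ergodicity and the finitarily Markovian property of the limit are automatic (the pattern $001$ pins down the hidden state).

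A second concrete gap is your criterion for the limit being finitarily Markovian. What is needed is that almost every past has some finite suffix which is not a proper suffix of any longer $v_j$; it is not enough that only finitely many $v_j$ occur as suffixes of the past. For example, with $v_j=01^{a_j}$ no $v_j$ ever occurs as a suffix of the all-ones past, yet every finite suffix $1^K$ is a proper suffix of all sufficiently long $v_j$, so that past has infinite memory. Your construction is pushed toward exactly this nested situation, because each $v_{k+1}$ is forced to end with the previously caught memory word $w_k$; ``pairwise incompatible as suffixes'' does not prevent it, and $\sum_k\pi_X(v_{k+1})<\infty$ with Borel--Cantelli controls occurrences of the whole words $v_{k+1}$, not of their suffixes, which is what the memory length depends on. Repairing this requires coordinating, across all stages, the probabilities of all sufficiently long suffixes of all later $v_j$'s (in the paper's construction the analogous bad set, pasts ending in arbitrarily long runs of ones, is negligible automatically since $P(K(X^0_{-\infty})=k)$ decays geometrically, as noted in the remark). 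Until these two points are settled the proposal does not constitute a proof.
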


\proof

First  we  define the same
Markov-chain as in  Ryabko \cite{Ryabko88} (cf. also Gy\"{o}rfi, Morvai, Yakowitz
\cite{GYMY98}, Morvai and Weiss \cite{SPL-MW05})
 which serves as the technical tool for  construction of our
counterexample. Let the state space $S$ be the non-negative integers and define
the transition probabilities $p_{i,j}$ as follows:

$p_{0,1}=p_{1,2}=1 $, and for all $s>1$: $p_{s,0} = p_{s,s+1} = {1\over 2}$.

 This construction yields a stationary and ergodic
Markov chain  $\{M_i\}$ with stationary distribution
$$
P(M=0)=P(M=1)={1\over 4}
$$
and
$$
P(M=i)={1\over  2^{i}} \mbox{\ \  for $i\ge 2$}.
$$

  We shall construct a finitarily Markovian process $X_n$ by defining a certain  function
$f$ from the state space $S$ to $\{0,1\}$ and setting $X_n = f(M_n)$. We will ensure
that it is finitarily Markovian by taking care that $f(0)=f(1)=0 , f(2)=1$ and for
all $s>2$ if $f(s)=0$ then $f(s+1)=1$. Thus, in the $X_n$ process whenever one
observes two successive zeroes and a one, it is  known that the underlying states in the Markov
chain were $012$. Note that if
 there is  an integer $K$ such that
$f(i)=1$ for all $i\ge K-1$ then the  process $\{X_n\}$ is a binary
Markov-chain with order
not greater than $K$.
(Indeed, the probabilities $P(X_n=1|X_0, \dots, X_{n-1})$ are
determined by the last $K$ bits $(X_{n-K},\dots,X_{n-1})$.)

We will define $f$ in stages using the stopping times and estimators that the
hypotheses of the theorem give us. At  stage $j$ there will be an $f^{(j)}$ and
 we will define a binary-valued process, $\{X^{(j)}_i\}$
by the formula: $X^{(j)}_i=f^{(j)}(M_i)$ where $f^{(j)}$ will be a $\{0,1\}$ valued
function of the state
space $S$ which is eventually one. As remarked, this ensures that all these processes
are actually finite order Markov chains. The desired $f$ will be the limit of these
$f^{(j)}$'s and it will take the value $0$ infinitely often. Now for the definition.

For all $0\le j\le \infty$,  set $f^{(j)}(0)=0$,
$f^{(j)}(1)=0$, and $f^{(j)}(2)=1$.

Define $f^{(0)}(k)=1$ for all $k\ge 3$, hence   since $f^{(0)}(i)$ is
eventually $1$, the  process $\{X^{(0)}_i = f^{(0)}(M_i)\}$ is a stationary
ergodic binary Markov chain with
order $k_0\le 3$.

\noindent
  Recalling the stopping times and estimators define the event
\begin{eqnarray*}
A_1(t_1,s_1)&=& \{ \mbox{For some $n$: }    h_n(f^{(0)}(M_0),\dots,f^{(0)}(M_{\lambda_n}))\le k_0 , \\
&&  f^{(0)}(M_i)=1 \ \mbox{for $\lambda_n-k_0+1\le i\le \lambda_n$,}
t_1\le\lambda_n\le s_1.\}
\end{eqnarray*}

Notice that this is a well defined event in the sample space of the Markov chain
${M_n}$. All of the events that we are about to define are in that one fixed sample
space, only the function $f^{(i)}$ will be changing.
By the hypotheses of the theorem
there are sufficiently large $s_1>t_1>3$ such that the probability
$$
P(A_1(t_1,s_1)|M_0=0, M_1=1, M_2=2) > 1-2^{-1}.
$$
Let  $f^{(1)}(i)=f^{(0)}(i)$ for $i=0,1 \dots,s_1$ and let $f^{(1)}(s_1+1)=0$,
$f^{(1)}(i)=1$ for $i\ge s_1+2$. It is clear that the memory of a sequence
with prefix $1^0_{-k_0-1}$ in the process $X^{(1)}_n = f^{(1)}(M_n)$ is
greater then $k_0$ but if the event $A_1$ occurs then the estimator will commit an
error at least once in the
interval $[t_1,s_1]$. The new process has an order $k_1 \leq s_1 + 3$.
We will  continue in this manner inductively. Assuming that we have already defined
$k_j,t_j,s_j, A_j$ we will now go to stage $j+1$ and show how to update  these
parameters.

Let $s_{j+1}>t_{j+1}>s_j+3$ be chosen such that for the event
\begin{eqnarray*}
A_{j+1}(t_{j+1},s_{j+1})&=&
 \{\mbox{For some $n$ : }    
h_n(f^{(j)}(M_0),\dots,f^{(j)}(M_{\lambda_n}))\le k_j,\\
&&
f^{(j)}(M_i)=1 \ \mbox{for $\lambda_n-k_j+1\le i\le \lambda_n$,}
t_{j+1}\le\lambda_n\le s_{j+1} \}.
\end{eqnarray*}
we have:
$$
P(A_{j+1}(t_{j+1},s_{j+1})|M_0=M_1=0, M_2=1)>1-2^{-{(j+1)}}.
$$
Set now $f^{(j+1)}(i)=f^{(j)}(i)$ for $i=0,1,\dots,s_{j+1}$ and let
$f^{(j+1)}(s_{j+1}+1)=0$ and
$f^{(j+1)}(i)=1$ for $i\ge s_{j+1}+2$. It is clear that the memory of a
sequence with suffix a string of $1$'s of length $k_j$  in the process $X^{(j+1)}_n = f^{(j+1)}(M_n)$
is
greater then $k_j$ and if the event $A_{j+1}$ happens then the
estimator will commit error at least once in the
interval $[t_{j+1},s_{j+1}]$. The new process has  an order
$k_{j+1} \leq s_{j+1} + 3$.
By induction, we have defined all the functions
$f^{(j)}$ for $0\le j<\infty$.
To complete the definition of $f$ simply put $f=\lim f^{(j)}$. By the construction
this is certainly well defined.

By the Borel-Cantelli Lemma, conditioned on the positive probability 
event $M_0M_1M_2 = 012$,
the events $A_j$ occur infinitely often
almost surely and this completes
the proof of  Theorem~\ref{Thlimits}.

\begin{remark}

   In the final process $X_n$  that we constructed 
$P(K(X_{-\infty}^0) = k)$ decays to zero exponentially fast and
in particular is summable. It follows that with probability one
eventually $K(X_0^{n}) \leq n$ so that the reason for our failure
to estimate the memory length correctly is not coming about because we 
don't even
see the memory word.

   It is also worth pointing out the sequence of moments on which
   the estimator is failing is of density zero. It  follows
   fairly easily from the ergodic theorem that if one is willing to
   tolerate such failures then a straightforward application
   of any backward estimation scheme will converge outside a set
   of density zero. The effort that we expended in $\S 3,4$ to achieve
   density $1 -\epsilon$ for the stopping was because eventually we wanted to
   guarantee that there would be no failures at all.

   \end{remark}

\section{Forward Estimation of the Conditional Probability for Finitarily Markovian Processes}

Let  the alphabet be finite or countably infinite.
Now our goal is to estimate the conditional probability $P(X_{n+1}=x|X^n_0)$ on stopping times
in a pointwise sense.

\noindent
Let ${\cal N}$ be a sequence of stopping times such that eventually almost surely $X^n_{n-K(X^n_{-\infty})+1}$ appears
at least
$n^{1-\gamma}$ times in $X^n_0$.

\noindent
Let $\rho_n$ be any estimate of the length of the memory from samples $X^n_0$ such that
$\rho_n-K(X^n_{-\infty})\to 0$ on ${\cal N}$.

\noindent
Define our estimate ${\hat q}_n(x)$ of the conditional probability $P(X_{n+1}=x|X^n_0)$ on ${\cal N}$ as
$$
{\hat q}_n(x)= { \#\{ \rho_n-1\le  i<n : X^i_{i-\rho_n+1}= X^n_{n-\rho_n+1}, 
X_{i+1}=x \}
\over
\#\{ \rho_n-1\le  i<n : X^i_{i-\rho_n+1}= X^n_{n-\rho_n+1} \} }.
$$

\begin{theorem} \label{fmprobtheorem} On $n\in {\cal N}$,
$$
|{\hat q}_n(x)-P(X_{n+1}=x|X^n_0)|\to 0 \ \mbox{almost surely.}
$$
\end{theorem}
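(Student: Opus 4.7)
The plan is to reduce the theorem to estimating a conditional probability given a known, fixed memory word, and then reuse the i.i.d.\ structure of Lemma~\ref{iidlemma} together with Hoeffding's inequality and a Borel-Cantelli union bound modeled on Lemma~\ref{keylemma1}.

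First, on $n\in{\cal N}$ the hypothesis $\rho_n-K(X^n_{-\infty})\to 0$, combined with integer-valuedness, gives $\rho_n=K(X^n_{-\infty})$ eventually almost surely, so $X^n_{n-\rho_n+1}$ is (eventually) the minimal memory word at time $n$. By the memory word property, $P(X_{n+1}=x\mid X^n_{-\infty})=p(x\mid X^n_{n-\rho_n+1})$, and since $X^n_{n-\rho_n+1}$ is a function of $X_0^n$ (eventually) this also equals $P(X_{n+1}=x\mid X^n_0)$. Thus it suffices to show $|\hat q_n(x)-p(x\mid X^n_{n-\rho_n+1})|\to 0$ almost surely on ${\cal N}$.

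For each fixed memory word $w^0_{-k+1}\in{\cal W}_k$, the forward analogue of Lemma~\ref{iidlemma} asserts that the symbols which follow occurrences of $w^0_{-k+1}$ are conditionally i.i.d.\ with law $p(\cdot\mid w^0_{-k+1})$. Given the hypothesis that on ${\cal N}$ the string $X^n_{n-\rho_n+1}$ appears at least $n^{1-\gamma}$ times in $X_0^n$, Hoeffding's inequality bounds the probability that the empirical conditional frequency deviates from $p(\cdot\mid w^0_{-k+1})$ by more than $n^{-\beta}$ by $2e^{-2n^{-2\beta}n^{1-\gamma}}$, uniformly in the word. A union bound over all $0\le k<n$, all positions, and all numbers of occurrences $h\ge\lfloor n^{1-\gamma}\rfloor$ yields a total estimate of order $n^2\sum_{h\ge\lfloor n^{1-\gamma}\rfloor}h\cdot 2e^{-2n^{-2\beta}h}$, exactly of the form encountered in Lemma~\ref{keylemma1}. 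This sum is summable in $n$ whenever $2\beta+\gamma<1$, so by Borel-Cantelli the deviation is at most $n^{-\beta}$ eventually almost surely, uniformly over the random memory word that actually appears at the right-hand end of $X_0^n$.

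The main obstacle is that the memory word itself is data-dependent: both its contents and its length $\rho_n$ vary with $n$, so a single-word Hoeffding bound is not sufficient. The remedy is exactly the union bound of Lemma~\ref{keylemma1}, in which the polynomial enumeration over positions and lengths is absorbed by the exponential-in-$n^{1-\gamma-2\beta}$ Hoeffding tail, thanks to the defining assumption on ${\cal N}$ that guarantees at least $n^{1-\gamma}$ occurrences. Once this uniform deviation bound is in hand, substituting $w^0_{-k+1}=X^n_{n-\rho_n+1}$ and combining with the reduction of the first paragraph yields $|\hat q_n(x)-P(X_{n+1}=x\mid X^n_0)|\le n^{-\beta}$ eventually almost surely on ${\cal N}$, which is the desired conclusion.
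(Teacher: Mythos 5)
Your proposal is correct and follows essentially the paper's own route: reduce to $\rho_n=K(X^n_{-\infty})$ eventually (hence $P(X_{n+1}=x\mid X^n_0)=P(X_{n+1}=x\mid X^n_{-\infty})=p(x\mid X^n_{n-\rho_n+1})$), then use the conditional i.i.d.\ structure of Lemma~\ref{iidlemma} together with Hoeffding's inequality and Borel--Cantelli, exploiting the guaranteed $n^{1-\gamma}$ occurrences of the memory word on ${\cal N}$. The paper packages this as Lemma~\ref{markestcon} (a fixed-$\epsilon$ deviation bound for the auxiliary estimators $q_n^j$, $j\ge\lfloor n^{1-\gamma}\rfloor$, with one of which $\hat q_n$ eventually coincides) instead of your $n^{-\beta}$ threshold with a Lemma~\ref{keylemma1}-style union bound over lengths and positions, but the underlying argument is the same.
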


\begin{corollary}
For the stopping times ${\cal N}$  and estimator $\rho_n$ in Theorem~\ref{postheoremp}, Theorem~\ref{fmprobtheorem}
holds and
the density of ${\cal N}$ is at least $1-\epsilon$.
\end{corollary}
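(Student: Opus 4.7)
The plan is to adapt the Borel--Cantelli argument used to prove Theorem~\ref{ntestthm} to the current setting, replacing the supremum over memory--word suffixes with the single empirical conditional probability estimator $\hat q_n$.

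First I would observe that since $\rho_n - K(X^n_{-\infty}) \to 0$ on $\mathcal{N}$ and both quantities are integer valued, eventually almost surely on $\mathcal{N}$ we have $\rho_n = K(X^n_{-\infty})$. This forces $W_n := X^n_{n-\rho_n+1}$ to be a memory word, and therefore $P(X_{n+1}=x \mid X^n_{-\infty}) = p(x \mid W_n)$. So the statement to be proved reduces to showing that $|\hat q_n(x) - p(x \mid W_n)| \to 0$ almost surely on $\mathcal{N}$.

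Next I would invoke the forward analog of Lemma~\ref{iidlemma}: conditional on $W_n=w$ being a given memory word of length $k$, the successors $\{X_{i+1} : \rho_n-1 \le i < n,\ X^i_{i-\rho_n+1}=w\}$ are i.i.d.\ with common law $p(\cdot \mid w)$. The proof is identical to that of Lemma~\ref{iidlemma}, just oriented forward from position $n$ rather than backward from $l$. By the hypothesis on $\mathcal{N}$, the number of such $i$ is at least $n^{1-\gamma}$ eventually, so Hoeffding's inequality gives
$$P\bigl(|\hat q_n(x) - p(x \mid w)| > n^{-\beta} \,\big|\, W_n=w,\ \#\text{occurrences}=h\bigr) \le 2 e^{-2 n^{-2\beta} h}.$$
Multiplying by $P(W_n=w,\ \#\text{occurrences}=h)$ and summing first over the countably many memory words $w$ (the alphabet size is absorbed because $\sum_w P(W_n=w) \le 1$, exactly as in Lemma~\ref{keylemma1}), then over $h \ge \lfloor n^{1-\gamma}\rfloor$, then over $\rho_n < n$, produces a bound of the form $n \sum_{h \ge \lfloor n^{1-\gamma}\rfloor} 2 e^{-2 n^{-2\beta} h}$. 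This is summable in $n$ as soon as $2\beta+\gamma < 1$, and Borel--Cantelli then gives $|\hat q_n(x) - p(x \mid W_n)| \le n^{-\beta}$ eventually on $\mathcal{N}$, completing the proof.

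The main obstacle, as in \S\ref{chpbackward}, is setting up the random indexing carefully so that Hoeffding is applied to a bona fide i.i.d.\ sequence with a deterministic sample size: one must condition simultaneously on $W_n=w$ being a specific memory word and on the exact number of prior occurrences, and then decouple the conditioning from the summation over words using the trick of Lemma~\ref{keylemma1}. Once this bookkeeping is in place, the remainder is a routine Borel--Cantelli argument that parallels the proof of Theorem~\ref{ntestthm}.
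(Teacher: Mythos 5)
Your overall plan is the right one, and in fact the paper's own proof of this corollary is nothing more than plugging the three conclusions of Theorem~\ref{postheoremp} (on ${\cal N}$ eventually $\rho_n=K(X^n_{-\infty})$, the word $X^n_{n-\rho_n+1}$ occurs at least $n^{1-\gamma}$ times in $X^n_0$, and ${\cal N}$ has density at least $1-\epsilon$) into the hypotheses of Theorem~\ref{fmprobtheorem}; what you have written is essentially a reproof of Theorem~\ref{fmprobtheorem}, whose skeleton (reduce to $\rho_n=K$, conditionally i.i.d.\ successors of the memory word, Hoeffding, union over counts $h\ge \lfloor n^{1-\gamma}\rfloor$, Borel--Cantelli) is the same as the paper's Lemma~\ref{markestcon}. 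However, the step you single out as the key bookkeeping is not valid as written: the Hoeffding bound conditioned on ``$W_n=w$ and $\#\mbox{occurrences}=h$''. Lemma~\ref{iidlemma} gives i.i.d.-ness of the successors conditionally on the occurrence of the memory word at time $n$ (and the following symbol); it does not survive additional conditioning on the number of occurrences of $w$ inside the window $[0,n]$, because that count is a function of the very data that contains the successor symbols (a successor of an occurrence of $w$ can itself create or destroy later occurrences), so under the conditional law given the exact count the successors are no longer i.i.d.\ and Hoeffding does not apply. This is also not ``the trick of Lemma~\ref{keylemma1}'': there the random count is never conditioned on; one fixes deterministic numbers $i,j$ of backward/forward occurrences (wherever they fall, possibly outside the observation window), bounds the deviation of that fixed-length average given only the occurrence of $wx$ at the fixed location, and then unions over all pairs with $i+j\ge\lfloor n^{1-\gamma}\rfloor$.

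The repair is exactly the paper's route. For deterministic $j$ define $q_n^j(x)$ as the average of the successors of the first $j$ prior occurrences given by the return times $\lambda^-_{n,K(X^n_{-\infty}),i}$ of (\ref{neglambda}); Hoeffding applies to these genuine conditionally i.i.d.\ averages, giving $P(|q_n^j(x)-P(X_{n+1}=x|X^n_{-\infty})|>\epsilon)\le 2e^{-2\epsilon^2 j}$ uniformly over memory words. Then use an event inclusion instead of conditioning on the count: on the event that the memory word occurs $h\ge\lfloor n^{1-\gamma}\rfloor$ times in $X^n_0$ one has ${\hat q}_n(x)=q_n^h(x)$, so the deviation event for ${\hat q}_n$ is contained in $\bigcup_{j\ge\lfloor n^{1-\gamma}\rfloor}\{|q_n^j(x)-P(X_{n+1}=x|X^n_{-\infty})|>\epsilon\}$, whose probability is at most $\sum_{j\ge\lfloor n^{1-\gamma}\rfloor}2e^{-2\epsilon^2 j}$, summable in $n$ (no factor $n$ and no shrinking threshold $n^{-\beta}$ are needed, though they do no harm). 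Two smaller omissions: you should record the one-line identification $P(X_{n+1}=x|X^n_0)=P(X_{n+1}=x|X^n_{-\infty})=p(x|W_n)$ once the memory word lies inside $X^n_0$ (your reduction silently switches between these), and the density claim of the corollary, which your proposal never addresses, is precisely (\ref{growthpartthmp}) of Theorem~\ref{postheoremp}.
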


\noindent
To prove the above theorem we define Markov estimators of the conditional probabilities.
Use $\lambda_{n,K(X^n_{-\infty}),i}^-$ defined in (\ref{neglambda}).
Define the Markov estimator using $j$ samples as
\begin{equation}
q_n^j(x)={1\over j} \sum_{i=1}^j 1_{\{X_{n-\lambda_{n,K(X^n_{-\infty}),i}^-+1}=x\}}.
\end{equation}

\begin{lemma} \label{markestcon} Almost surely,
$$
\max_{j\ge \lfloor n^{1-\gamma}\rfloor } |q_n^j(x)-P(X_{n+1}=x|X^n_{-\infty})|\to 0.
$$
\end{lemma}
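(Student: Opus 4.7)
The plan is to reduce the problem to a direct application of Hoeffding's inequality combined with the Borel--Cantelli lemma, using the i.i.d.\ structure already established in Lemma~\ref{iidlemma}. The key observation is that on the event $\{K(X^n_{-\infty})=k,\ X^n_{n-k+1}=w\}$ the word $w$ is by definition a memory word, so $P(X_{n+1}=x\mid X^n_{-\infty})=p(x\mid w)$. Moreover, Lemma~\ref{iidlemma} (applied with $l=n$, then averaging out the conditioning on $X_{n+1}$) says that $X_{n-\lambda^-_{n,k,1}+1},\dots,X_{n-\lambda^-_{n,k,j}+1}$ are conditionally i.i.d.\ with common distribution $p(\cdot\mid w)$. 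Thus $q_n^j(x)$ is, conditional on $\{K(X^n_{-\infty})=k,\ X^n_{n-k+1}=w\}$, the empirical mean of $j$ i.i.d.\ Bernoulli$(p(x\mid w))$ random variables, and $p(x\mid w)$ is exactly the target $P(X_{n+1}=x\mid X^n_{-\infty})$.

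Next I would invoke Hoeffding's inequality: for any fixed $\epsilon>0$ and any $k,w$,
$$
P\bigl(|q_n^j(x)-p(x\mid w)|>\epsilon \;\bigm|\; K(X^n_{-\infty})=k,\; X^n_{n-k+1}=w\bigr)\le 2e^{-2\epsilon^2 j}.
$$
Because this bound does not depend on $(k,w)$, integrating against the joint distribution of $(K(X^n_{-\infty}),X^n_{n-K+1})$ gives the unconditional bound
$$
P\bigl(|q_n^j(x)-P(X_{n+1}=x\mid X^n_{-\infty})|>\epsilon\bigr)\le 2e^{-2\epsilon^2 j}.
$$
Summing over $j\ge\lfloor n^{1-\gamma}\rfloor$,
$$
P\Bigl(\max_{j\ge \lfloor n^{1-\gamma}\rfloor}|q_n^j(x)-P(X_{n+1}=x\mid X^n_{-\infty})|>\epsilon\Bigr)\le \frac{2\,e^{-2\epsilon^2\lfloor n^{1-\gamma}\rfloor}}{1-e^{-2\epsilon^2}},
$$
which is summable in $n$. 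Borel--Cantelli then gives that, almost surely, the maximum exceeds $\epsilon$ only finitely often. Choosing $\epsilon$ along a countable sequence tending to $0$ yields the almost sure convergence claimed in the lemma.

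The only delicate point is the first step: recognising that the definition of $q_n^j(x)$ lets one apply Lemma~\ref{iidlemma} \emph{with the random memory word $w=X^n_{n-K+1}$}, and that no union bound over $(k,w)$ degrades the probability estimate, since Hoeffding's bound is uniform in $(k,w)$. Once that observation is in place, the rest is a routine exponential tail computation, and the length $n^{1-\gamma}$ of available samples is far more than enough to drive the right-hand side into a summable regime.
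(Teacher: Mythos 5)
Your proposal is correct and follows essentially the same route as the paper: invoke Lemma~\ref{iidlemma} to view $q_n^j(x)$ as an average of (conditionally) i.i.d.\ bounded variables with mean $P(X_{n+1}=x|X^n_{-\infty})$, apply Hoeffding's inequality, sum over $j\ge\lfloor n^{1-\gamma}\rfloor$, and finish with Borel--Cantelli. The only difference is that you make explicit the conditioning on $\{K(X^n_{-\infty})=k,\ X^n_{n-k+1}=w\}$ and the uniformity of the Hoeffding bound in $(k,w)$, a step the paper leaves implicit.
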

\proof
Since by Lemma ~\ref{iidlemma}, $q_n^j(x)$ is an average of independent and identically distributed bounded random
variables
so one may apply Hoeffding's inequality (cf. Hoeffding \cite{Hoeffding63} or Theorem 8.1 of Devroye et. al.
\cite{DGYL96}):
$$
\sum_{j= \lfloor n^{1-\gamma}\rfloor }^{\infty}
 P(|q_n^j(x)-P(X_{n+1}=x|X^n_{-\infty})|>\epsilon)\le
\sum_{j= \lfloor n^{1-\gamma}\rfloor }^{\infty}
2 e^{-2\epsilon^2 j}.
$$
The right hand side is summable in $n$ and the Borel-Cantelli lemma yields  Lemma~\ref{markestcon}. The proof of
Lemma~\ref{markestcon} is complete.

\bigskip
\noindent
{\bf Proof of Theorem~\ref{fmprobtheorem}:}
Since eventually, for $n\in {\cal N}$,  $X^n_{n-K(X^n_{-\infty})+1}$ appears $n^{1-\gamma}$ times in $X^n_0$ so
 ${\hat q}_n(x)=q_n^j(x)$ for some $j\ge \lfloor n^{1-\gamma}\rfloor$, and
$P(X_{n+1}=x|X^n_0)=P(X_{n+1}=x|X^n_{-\infty})$, the result follows from Lemma~\ref{markestcon}.
The proof of Theorem~\ref{fmprobtheorem} is complete.

\section{Forward Estimation of the Conditional Probability for  Markov Processes}

Let $\{X_n\}$ be a stationary and ergodic finite or countably infinite alphabet  Markov chain with order $K$.
Let $ORDEST_n$ be an estimator of the order from samples $X^n_0$ such that $ORDEST_n\to K$ almost surely.
Such an estimator can be found e.g. in Morvai and Weiss \cite{MW05}.
Let $ n\in {\cal N}$ if $X^n_{n-ORDEST_n+1}$ appears at least $n^{1-\gamma}$
times
in $X^n_0$. ${\cal N}$ is a sequence of stopping times.
Let
$$
{\hat q}_n(x)= 
{ \#\{ ORDEST_n-1\le  i<n : X^i_{i-ORDEST_n+1}= X^n_{n-ORDEST_n+1}, 
X_{i+1}=x \}
\over
\#\{ ORDEST_n-1\le  i<n : X^i_{i-ORDEST_n+1}= X^n_{n-ORDEST_n+1} \} }.
$$

\begin{theorem} \label{markovprobthm}
Assume $ORDEST_n$ equals the order eventually almost surely.Then
on $n\in {\cal N}$,
$$
|{\hat q}_n(x)-P(X_{n+1}=x|X^n_{n-K})|\to 0 \  \mbox{almost surely.}
$$
and
$$
\liminf_{n\to\infty} { \left| {\cal N}\bigcap \{0,1,\dots,n-1\} \right| \over n}=1.
$$
If the Markov chain turns out to take values from a finite set, then ${\cal N}$ takes as values  all
but finitely many positive integers.
\end{theorem}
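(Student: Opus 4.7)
The plan is to reduce everything to the fact that $ORDEST_n$ eventually equals the true order $K$ almost surely. For a $K$-th order Markov chain, every string of length $K$ with positive probability is automatically a memory word. So on the event $\{ORDEST_n = K\}$ the estimator ${\hat q}_n$ is just the empirical conditional frequency after conditioning on a genuine memory word that we see at least $n^{1-\gamma}$ times. This means for the consistency claim I would observe that, for $n\in{\cal N}$ large enough, we have ${\hat q}_n(x)=q_n^j(x)$ for some $j\ge\lfloor n^{1-\gamma}\rfloor$, where $q_n^j$ is the i.i.d.-type average from the previous section. Lemma~\ref{markestcon} then yields ${\hat q}_n(x)\to P(X_{n+1}=x|X^n_{-\infty})$ almost surely, and the Markov property gives $P(X_{n+1}=x|X^n_{-\infty})=P(X_{n+1}=x|X^n_{n-K})$.

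For the density assertion I would fix $\delta>0$ and let $W_\delta=\{w\in{\cal X}^K:p(w)\ge\delta\}$, which is a finite set regardless of whether the alphabet is countable. For each $w\in W_\delta$ the ergodic theorem gives empirical frequency $\to p(w)\ge\delta$ in $X^n_0$, so the count of $w$ exceeds $n^{1-\gamma}$ for all large $n$. Combined with $ORDEST_n=K$ eventually almost surely, this shows that for all sufficiently large $n$ with $X^n_{n-K+1}\in W_\delta$ we have $n\in{\cal N}$. Applying the ergodic theorem once more,
\[
\liminf_{n\to\infty}\frac{|{\cal N}\cap\{0,\dots,n-1\}|}{n}\ge P(X^0_{-K+1}\in W_\delta)=1-\sum_{w\notin W_\delta}p(w)
\]
almost surely. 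Taking a countable sequence $\delta_k\downarrow 0$ and intersecting the corresponding full-measure sets, the right-hand side tends to $1$ and the density statement follows.

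For the finite-alphabet refinement, ${\cal X}^K$ is finite, so I would pick $\delta$ strictly smaller than the minimum positive value of $p(w)$ over $w\in{\cal X}^K$; then $W_\delta$ is exactly the set of positive-probability strings of length $K$, and almost surely $X^n_{n-K+1}\in W_\delta$ for every $n$. The preceding paragraph then gives $n\in{\cal N}$ for all but finitely many $n$ almost surely. The main obstacle is the countable case, where no single $\delta$ can accommodate every positive-probability length-$K$ string; the $\delta_k\downarrow 0$ countable-intersection trick above is what handles it, and aside from that the proof is a straightforward combination of the ergodic theorem with the already-established consistency of $ORDEST_n$ and of Lemma~\ref{markestcon}.
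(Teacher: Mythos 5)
Your argument follows the paper's own proof in all essentials: reduce to $ORDEST_n=K$ eventually almost surely, observe that on ${\cal N}$ the estimator conditions on a length-$K$ memory word seen at least $n^{1-\gamma}$ times, apply the Hoeffding/Borel--Cantelli bound uniformly over $j\ge\lfloor n^{1-\gamma}\rfloor$, and get the density statement from the ergodic theorem applied to a finite family of high-probability length-$K$ words; your $\delta_k\downarrow 0$ refinement is in fact a more careful rendering of the paper's one-line density claim in the countable case, and your finite-alphabet argument is exactly the paper's. One slip to correct: you invoke Lemma~\ref{markestcon}, whose $q_n^j$ is built from $\lambda_{n,K(X^n_{-\infty}),i}^-$, i.e.\ from occurrences of the (possibly strictly shorter) realized memory word $X^n_{n-K(X^n_{-\infty})+1}$. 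When $K(X^n_{-\infty})<K$ the identity ${\hat q}_n(x)=q_n^j(x)$ fails, because ${\hat q}_n$ counts occurrences of the full length-$K$ suffix $X^n_{n-K+1}$, not of the shorter word. The correct tool is Lemma~\ref{ordmarkestcon}, which runs the identical Hoeffding/Borel--Cantelli argument with $\lambda_{n,K,i}^-$ (legitimate because, as you yourself note, every positive-probability string of length $K$ is a memory word, so Lemma~\ref{iidlemma} applies to it) and whose limit is directly $P(X_{n+1}=x|X^n_{n-K})$, making your extra appeal to the Markov property unnecessary. With that substitution your proof coincides with the paper's.
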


\noindent
To prove the above theorem we define Markov estimators of the conditional probabilities.
Use $\lambda_{n,K,i}^-$ defined in (\ref{neglambda}).
Define the Markov estimator using $j$ samples as
\begin{equation}
q_n^j(x)={1\over j} \sum_{i=1}^j 1_{\{X_{n-\lambda_{n,K,i}^- +1}=x\}}.
\end{equation}

\begin{lemma} \label{ordmarkestcon} Almost surely,
$$
\max_{j\ge \lfloor n^{1-\gamma}\rfloor } |q_n^j(x)-P(X_{n+1}=x|X^n_{n-K})|\to 0.
$$
\end{lemma}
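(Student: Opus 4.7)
The plan is to follow the proof of Lemma~\ref{markestcon} essentially verbatim, exploiting the key simplification that in the order-$K$ Markov setting the memory length is deterministic and every string $w^0_{-K+1}$ of length $K$ with positive probability is automatically a memory word. Indeed, for a Markov chain of order $K$ and any $i\ge 1$, $z^{-K}_{-K-i+1}\in {\cal X}^i$, and $y\in {\cal X}$ with $p(z^{-K}_{-K-i+1},w^0_{-K+1},y)>0$, one has $p(y|z^{-K}_{-K-i+1},w^0_{-K+1})=p(y|w^0_{-K+1})$ by the Markov property. So Lemma~\ref{iidlemma} applies with $k=K$ for \emph{every} realization of $X^n_{n-K+1}$.

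Fix $n$ and the target letter $x$. Condition on the event $\{X^n_{n-K+1}=w\}$ for any length-$K$ word $w$ with $p(w)>0$. By Lemma~\ref{iidlemma}, the variables $X_{n-\lambda_{n,K,i}^-+1}$ for $i\ge 1$ are conditionally i.i.d. with distribution $p(\cdot|w)$ given $X^n_{n-K+1}=w$ and $X_{n+1}=x'$; since this conditional law does not depend on $x'$, the i.i.d. property persists when we condition only on $X^n_{n-K+1}=w$. By the Markov property, $p(x|w)=P(X_{n+1}=x|X^n_{n-K+1}=w)=P(X_{n+1}=x|X^n_{n-K})$ on this event. Hence, applying Hoeffding's inequality to the bounded i.i.d. indicators $\mathbf{1}_{\{X_{n-\lambda_{n,K,i}^-+1}=x\}}$ gives
$$
P\bigl(\,|q_n^j(x)-P(X_{n+1}=x|X^n_{n-K})|>\epsilon\,\big|\,X^n_{n-K+1}=w\bigr)\le 2e^{-2\epsilon^2 j}.
$$
Multiplying by $P(X^n_{n-K+1}=w)$ and summing over all admissible $w$ removes the conditioning and yields the unconditional estimate $P(|q_n^j(x)-P(X_{n+1}=x|X^n_{n-K})|>\epsilon)\le 2e^{-2\epsilon^2 j}$.

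To control the maximum over $j$, I would take a union bound:
$$
P\Bigl(\max_{j\ge \lfloor n^{1-\gamma}\rfloor}|q_n^j(x)-P(X_{n+1}=x|X^n_{n-K})|>\epsilon\Bigr)
\le \sum_{j=\lfloor n^{1-\gamma}\rfloor}^{\infty} 2e^{-2\epsilon^2 j},
$$
which is a geometric tail of order $e^{-2\epsilon^2 n^{1-\gamma}}$ and is therefore summable in $n$. The Borel--Cantelli lemma then gives that for every $\epsilon>0$ the event $\{\max_{j\ge \lfloor n^{1-\gamma}\rfloor}|q_n^j(x)-P(X_{n+1}=x|X^n_{n-K})|>\epsilon\}$ occurs only finitely often almost surely, proving the claim.

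There is no real obstacle beyond a careful bookkeeping of the conditioning: the only subtlety is verifying that the i.i.d. conclusion of Lemma~\ref{iidlemma} — which is stated conditional on both $X^l_{l-k+1}$ and $X_{l+1}$ — may be used conditioning only on $X^n_{n-K+1}$, which follows because the conditional distribution given $X^n_{n-K+1}=w,X_{n+1}=x'$ is $p(\cdot|w)$, independent of $x'$. Otherwise the proof is a direct specialization of Lemma~\ref{markestcon} to the Markov-$K$ case.
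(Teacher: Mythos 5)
Your proposal is correct and follows essentially the same route as the paper, which simply says the proof goes along the lines of Lemma~\ref{markestcon}: every positive-probability word of length $K$ is a memory word, so Lemma~\ref{iidlemma} plus Hoeffding's inequality, a union bound over $j\ge\lfloor n^{1-\gamma}\rfloor$, and Borel--Cantelli give the result. Your extra care in removing the conditioning on $X_{n+1}$ (noting the conditional law $p(\cdot|w)$ does not depend on that symbol) is a detail the paper leaves implicit, and it is handled correctly.
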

\proof
The proof goes along the lines of the proof of Lemma~\ref{markestcon}.
The proof of
Lemma~\ref{ordmarkestcon} is complete.

\noindent
{\bf Proof of Theorem~\ref{markovprobthm}:}
Since $ORDEST_n=K$ eventually, and so for $n\in {\cal N}$: $X^n_{n-K+1}$ appears at least $n^{1-\gamma}$ times thus
${\hat q}_n(x)=q_n^j(x)$ for some $j\ge \lfloor n^{1-\gamma}\rfloor$ and the result follows from Lemma~\ref{ordmarkestcon}.
Since any word of length $K$ with positive probability appears eventually almost surely $n^{1-\gamma}$ times in $X^n_0$
thus ${\cal N}$ has density one.
If the alphabet is finite, then the number of words with length $K$ is finite and by ergodicity, eventually almost
surely  all words with length $K$ which has positive probability appears at least $n^{1-\gamma}$ times.
The proof of Theorem~\ref{markovprobthm} is complete.



\end{document}